\documentclass[11pt, a4paper,svgnames]{article} 
\usepackage[margin=1.206in]{geometry}
\usepackage[english]{babel}
\usepackage[latin1]{inputenc}
\usepackage{mathtext}
\usepackage{amsmath, amssymb}
\usepackage{graphicx}
\usepackage{caption}
\usepackage{ mathrsfs }
\usepackage{amsthm}
\usepackage[makeroom]{cancel}
\usepackage{ tipa }
\usepackage[numbers,square]{natbib}
\usepackage{titlesec}
\usepackage{caption}
\usepackage{subcaption}
\usepackage{dsfont}
\usepackage{xcolor}
\usepackage{url}

\usepackage[breaklinks,colorlinks=true,linkcolor=Blue,citecolor=Green,urlcolor=Blue]{hyperref}

\usepackage{tikz}
\usetikzlibrary{shadings}
\usetikzlibrary{fadings}
\usepackage{pgfplots}
\usetikzlibrary{pgfplots.groupplots}
\pgfplotsset{compat=1.9}
\usepackage{pgfplotstable}
\usepackage{subcaption}
\usepackage[nameinlink]{cleveref}

\usepackage{tkz-graph}
\usepackage{float}

\tikzstyle{outerframe}=[very thick,Green,fill=Green!30!white,opacity=0.5,even odd rule]
\tikzstyle{innerframe}=[very thick,Blue,fill=Blue!30!white,opacity=0.5,even odd rule]

\pagestyle{myheadings}
\thispagestyle{plain}

\newcommand{\rank}{\operatorname{rank}}
\newcommand{\tr}{\operatorname{tr}}

\newcommand{\sgn}{\operatorname{sgn}}

\newcommand{\Norm}[2]{ \left\lVert #1 \right\rVert_{#2}}

\newcommand{\RE}{\operatorname{Re}}

\newcommand{\diag}{\operatorname{diag}}
\newcommand{\MOD}{\operatorname{mod}}
\newcommand{\hh}{\mathcal{H}}

\newcommand{\est}[1]{\hat{#1}}
\newcommand{\tx}[1]{{#1}}
\newcommand{\Tor}{\mathbb{T}}

\newcommand{\Co}{\mathbb{C}}

\newtheorem{theorem}{Theorem}[section]
\newtheorem{lemma}[theorem]{Lemma}

\newtheorem{corollary}[theorem]{Corollary}

\bibliographystyle{elsarticle-num}

\makeatletter
\newcommand{\footremember}[2]{%
\footnote{#2}
\newcounter{#1}
\setcounter{#1}{\value{footnote}}%
}
\newcommand{\footrecall}[1]{%
\footnotemark[\value{#1}]%
}
\makeatother
\title{\Large\bfseries On recovery guarantees for angular synchronization}
\date{\today}
\author{Frank Filbir\footremember{HMGU}{Mathematical Imaging and Data Analysis, Department of Scientific Computing, Helmholtz Center Munich, 85764 Neuherberg, Germany 
(\href{mailto:filbir@helmholtz-muenchen.de}{filbir@helmholtz-muenchen.de})} \and Felix Krahmer\footremember{TUM}{Department of Mathematics, Technical University of Munich, 85748 
Garching bei M\"unchen, Germany 
(\href{mailto:oleh.melnyk@tum.de}{oleh.melnyk@tum.de},\href{mailto:felix.krahmer@tum.de}{felix.krahmer@tum.de}) } \and Oleh Melnyk\footrecall{HMGU} \footrecall{TUM} 
}
\begin{document}
\selectlanguage{english}

\maketitle
\vspace{-3mm}

\begin{abstract}
The angular synchronization problem of estimating a set of unknown angles from their known noisy pairwise differences arises in various applications. 
It can be reformulated as a optimization problem on graphs involving the graph Laplacian matrix.
We consider a general, weighted version of this problem, where the impact of the noise differs between different pairs of entries and some of the differences are erased completely; this version arises for example in ptychography.
We study two common approaches for solving this problem, 
namely eigenvector relaxation and semidefinite convex relaxation. Although some recovery guarantees are available for both methods, their performance is either
unsatisfying or restricted to the unweighted graphs. We close this gap, deriving recovery guarantees for the weighted problem that are completely analogous to the unweighted version.\vspace{0.5cm}\\

Keywords: angular synchronization, 
graph Laplacian,
ptychography,
eigenvector relaxation,
semidefinite programming relaxation
\end{abstract}
\vspace{4mm}

\section{Introduction}\label{sec: Intro}
In this paper we consider the problem of recovering a $d$ dimensional vector of angles $\varphi \in [0,2\pi)^d$ from noisy pairwise differences of its 
entries  $\varphi_\ell - \varphi_j  + \eta_{\ell,j} \MOD 2\pi,\ \ell,j \in \{1,\ldots,d\}$, where $\eta_{\ell,j}$ denotes noise. 
This problem is commonly referred to as \textit{angular synchronization} or \textit{phase synchronization}.
It frequently arises in various applications such as recovery from phaseless measurements \cite{Iwen.2020, Preskitt.2018, Alexeev.2014, Li.2017, Marchesini.2016, Melnyk.7820197122019, Perlmutter.7242019, Forstner.2020}, ordering of data from relative ranks \cite{Cucuringu.2016}, digital communications \cite{Kisialiou.2010}, and 
distributed systems \cite{Giridhar.2006}. The problem of angular synchronization is also closely related to the broader problem of pose graph optimization 
\cite{Carlone.2016}, which appears in robotics and computer vision. \par

Rather than working with the angles $\varphi_\ell$ directly, one typically considers the associated phase factors $\tx x_\ell := e^{i \varphi_\ell},\ \ell \in \{1,\ldots,d\}$. Hence the vector $\tx x=(x_j)_{j=1}^d$ to be recovered belongs to the $d$-dimensional torus 
\[
\Tor^d := \{ v \in \Co^d ~|~ |v_1| = \ldots = |v_d| = 1 \}.
\] 
After this transformation, the pairwise differences $\varphi_\ell - \varphi_j \MOD 2\pi$, $\ell,j \in \{1,\ldots,d\}$ take the form of a product 
\[
e^{i (\varphi_\ell - \varphi_j)} = e^{i \varphi_\ell} \cdot e^{- i \varphi_j} = \tx x_\ell \tx x_j^*, 
\]
where $z^*$ stands for complex conjugate of the number $z$ and complex conjugate transpose in the case of vectors.\\ 
The angular synchronization problem has clearly no unique solution as multiplying the vector $x$ by a factor $e^{i\theta}$ leads to the same product 
$x_\ell x_j^*$. Hence we can at best recover $x$ up to a global phase factor, that is, two solutions $x,x'\in\mathbb{C}^d$ are to be considered equivalent if $y=e^{i\theta}x'$ for some $\theta\in[0,2\pi)$. 
A natural distance measure between two equivalence classes is given by
\begin{equation}
    d(x, x')=\min_{\theta\in[0,2\pi)} \Norm{x-e^{i\theta} x'}{2}.  \label{eq:distance}
\end{equation}
A solution to the angular synchronization problem is thus any vector for which this expression vanishes.

In many applications such as certain algorithms for ptychography \cite{Iwen.2020, Preskitt.2018, Melnyk.7820197122019, Perlmutter.7242019, Forstner.2020}, noisy observations of only a strict subset of the set of differences are available. 
To mathematically describe this restriction we will work with the quantity 
\[
E := \{ (\ell,j) \in \{1,\dots, d\}\times \{1,\dots, d\}:  \text{noisy } \varphi_\ell - \varphi_j \text{ is known and } j\neq \ell\}.
\] 

In these ptychography applications, one also encounters a version of the problem that is generalized in yet another way. Namely, the entries of the vector $y$ to be recovered are not all of modulus $1$ (but still assumed to be known). The measurements are still of the form $y_j y_k^*$ affected by noise. Clearly this generalized problem can be directly reduced to the angular synchronization problem in its original form by just dividing each measurement by the product of the known magnitudes of the associated entries, but one should note that the noise is also affected by this transformation.

We will now present a short overview of the major developments in angular synchronization. The approaches to the problem mainly split into two dominant branches, which essentially differ by the underlying noise model.


In the first branch, it is assumed that the observed pairwise products of the unknown phase factors are affected by independent Gaussian noise. Typically these results work with $E=\{(\ell,j) | j,\ell, \in \{1,\dots, d\}, j\neq \ell\}$, i.e., assuming control of the full set of pairwise differences. 
That is, the matrix of measurements $\est X$ is given by 
\begin{equation}\label{eq: model1}
\est X = \tx x \tx x^* + \sigma\Delta,
\end{equation}
where $\Delta$ is a $d \times d$ Hermitian matrix with $\Delta_{\ell,\ell} = 0$ and $\Delta_{\ell,j},\ \ell > j,$ being independent centered complex Gaussian 
random variables with unit variance and $\sigma>0$. This noise model allows to perform maximum likelihood estimation which leads to the least squares 
problem  (LSP)
\begin{equation}\label{eq: ls formulation}
\min_{z \in \Tor^d} \frac{1}{2} \sum_{ (\ell, j ) \in E} w_{\ell,j} |z_\ell -  \est X_{\ell,j} z_j |^2,
\end{equation}
with weights $w_{\ell,j} = 1/\sigma^2$, $\ell \neq j$ and $w_{\ell,\ell} = 0$. 
Due to the condition $z \in \Tor^d$, the LSP \eqref{eq: ls formulation} is NP-hard \cite{Zhang.2006}. Therefore, Singer \cite{Singer.2011} proposed two possible relaxations, 
the eigenvector relaxation (ER) and the semidefinite convex relaxation (SDP). Both will be discussed in Section \ref{sec: prev results}.  

By a closer inspection of the maximum likelihood approach, Bandeira, Boumal and Singer \cite{Bandeira.2017} were able to establish an error bound for the solution of the LSP \eqref{eq: ls formulation} which holds with high probability. In addition the authors gave sufficient conditions on the standard deviation 
$\sigma$ under which the SDP recovers the solution of the LSP \eqref{eq: ls formulation}. As an alternative to the relaxation approaches Boumal \cite{Boumal.2016} proposed an iterative approach called generalized power method (GPM) to solve the LSP directly. He showed that the method converges to 
the minimizer of \eqref{eq: ls formulation}. Later Liu et al. \cite{Liu.2017} provided additional details about the convergence rate of the GPM. 
In subsequent work \cite{Zhong.2018}, Zhong and Boumal extended the admissible range of $\sigma$ providing near-optimal error bounds for solutions of the LSP, ER and the GPM and improved the sufficient conditions for the tightness of the SDP relaxation. 
 
For the variant of the angular synchronization problem where the vector $y$ to be recovered does not only have entries of modulus one, this theory does not directly apply, as the added Gaussian noise will encounter entrywise rescaling and hence no longer have the same variance for all entries. The least squares approach will, however, have a natural generalization. In analogy to \eqref{eq: ls formulation} where  all differences are multiplied by the inverse of the variance of the i.i.d. noise variables, one weights each difference with the inverse of the variance of the corresponding rescaled noise term, which yields a linear scaling in $|y_j y_\ell|$. While this method is not covered by the theory just discussed, it serves as an important motivation for the approach of this paper.
 
The second branch of development for the angular synchronization problem works with the model that the angular differences rather than the associated phase factors are affected by noise. This version of the problem has also been studied for more general sets $E$.
Consequently, the matrix of measurements $\est X$ in this model is given by the entries 
\begin{equation}\label{eq: X tilde}
\est X_{\ell,j}=
\begin{cases}
e^{i (\varphi_\ell - \varphi_j + \eta_{\ell,j}) }, & (\ell,j) \in E,\\
0, & (\ell,j) \notin E,
\end{cases}
\end{equation}
where $\eta_{\ell,j}$ corresponds to the angular noise, or
\begin{equation}\label{eq: Y tilde}
\est Y_{\ell,j}=
\begin{cases}
|y_\ell y_j| e^{i (\varphi_\ell - \varphi_j + \eta_{\ell,j}) }, & (\ell,j) \in E,\\
0, & (\ell,j) \notin E,
\end{cases}
\end{equation}
when the entries to be recovered are not of unit modulus.

Under this model, random noise is somewhat harder to study due to the multiplicative structure. Consequently, most works employ an adversarial noise model making no assumptions on the distribution of the noise. That is, maximum likelihood estimation is no longer applicable. Nevertheless, 
weighted least squares minimization \eqref{eq: ls formulation} can still be applied without the statistical justification; and a natural choice for the weights remains $w_{j,k} = |y_j y_k|$. This is in line with the observation that  if for two vectors $y$ and $\tilde y$ the modulus of each entry agrees, then smaller entries play less of a role in determining distance in the sense of \eqref{eq:distance}. Indeed the expansion 
\[
d(y, \tilde y)= \min_{\theta\in[0,2\pi)} \Norm{\tilde y-e^{i\theta} y}{2}^2 
=
\min_{\theta\in[0,2\pi)} \sum_{\ell =1}^{d} |y_\ell|^2 |\tilde x_\ell -e^{i\theta} x_\ell |^2,
\]
motivates weights of $|y_\ell|^2$ for the diagonal entries, which naturally generalizes to the weights motivated by the maximum likelihood formula.
For ptychography applications, these weights have also been shown to numerically outperfom the unweighted scenario  (see Section 4.4 in \cite{Preskitt.2018}).
\par

For the multiplicative noise model, several error bounds
have been presented in the literature. Iwen et al. \cite{Iwen.2020} worked with the unweighted LSP \eqref{eq: ls 
formulation} and established recovery guarantees for the ER based on Cheeger's inequality \cite{Bandeira.2013}. Later in \cite{Preskitt.2018}, Preskitt developed 
error bounds for the unweighted case of the LSP. He additionally developed alternative bounds for any selection of weights in the problem \eqref{eq: ls formulation} and 
provided sufficient conditions for tightness of the SDP relaxation. \par

In the literature, the SDP relaxation is studied more often, as under certain conditions it recovers a true solution of the optimization problem 
\eqref{eq: ls formulation}. On the other hand, it is computationally heavy and above a certain noise level the relaxation is longer tight, so SDP fails to return the exact solution of the LSP. 
Thus beyond this threshold, no recovery guarantees for SDP are available.
ER, in contrast, is much faster, especially for large dimension $d$, and its recovery guarantees, where available, are not restricted by tightness assumptions.
Before this paper, however, such guarantees were only available for the unweighted scenarios, even though SDP and ER exhibit similar reconstruction accuracy in numerical experiments.


  
In this paper, we close this gap, providing recovery guarantees for weighted angular synchronization via eigenvector relaxation from measurements of the form \eqref{eq: X tilde}, following the setup of \cite{Iwen.2020, Preskitt.2018}. We numerically demonstrate that our guarantees even outperform the best known guarantees for the unrelaxed problem LSP. Along the way, we also establish improved bounds for LSP.


\section{Problem setup and previous results}\label{sec: prev results}
We study the problem of recovering a vector $x=(x_j)_{j=1}^d$ with unimodular entries $x_j=e^{i\varphi_j}$ from partial and possibly noisy information on the pairwise 
differences $x_\ell x_j^\ast=e^{i(\varphi_\ell-\varphi_j)}$ for all pairs $(\ell,j)$ in some set $E \subset [d]\times [d]$. Here we used the notation $[n]=\{1,\dots, n\}$.
As we consider angular noise, the noisy observations will take the form $e^{i(\varphi_\ell-\varphi_j+\eta_{\ell,j})}$, where $\eta_{\ell,j}\in (-\pi, \pi]$ is the angular noise.

The phase factors corresponding to the true pairwise differences will be arranged as a matrix $X\in\mathbb{C}^{d\times d}$, the noisy observation as a matrix $\hat X\in\mathbb{C}^{d\times d}$, that is, the entries of these matrices are given by
$$
X_{\ell,j}=\begin{cases} e^{i(\varphi_\ell-\varphi_j)} & (\ell,j)\in E,\\ 0,& (\ell,j)\notin E,\end{cases},\qquad 
\hat{X}_{\ell,j}=\begin{cases} e^{i(\varphi_\ell-\varphi_j+\eta_{\ell,j})}& (\ell,j)\in E,\\ 0,& (\ell,j)\notin E.\end{cases}
$$
By  $\hh^d$ we denote the space of all $d \times d$ Hermitian matrices. With $N\in\hh^d$ with  entries $N_{\ell,j}=e^{i\eta_{\ell,j}}$ denoting the matrix rearrangement of the multiplicative noise, one observes that these two matrix representations are related via $\hat{X}=X\circ N$, where for two matrices $A,B\in\Co^{d\times d}$, $A\circ B$ denotes their  Hadamard product as defined by $(A\circ B)_{n,m}=A_{n,m}\, B_{n,m}$.\\ 
As a measure for the noise level, we will use size of $X-\hat X$ in a weighted Frobenius norm or a weighted spectral norm; recall that for $A \in \Co^{d \times d}$  the Frobenius norm and the spectral norm are given by 
\begin{align*}
\Norm{A}{F} := \tr \left( A^*A \right) 
\text{ and }
\Norm{A}{\infty} := \max_{\Norm{v}{2}=1 } \Norm{Av}{2}
\text{, respectively}.
\end{align*}
The quality of reconstruction will be measured in the Euclidean norm on $\Co^d$, given by
\[
\Norm{v}{2} := \left( \sum_{i=1}^{d} |v_i|^2 \right)^{1/2}.
\]
For the proofs we will also need the supremum norm $\Norm{v}{\infty} := \max_{1\leq j \leq d} |v_j|$. 



We will write $A \succeq 0$ if the matrix $A$ is positive semidefinite, that is  
\[
v^* A v \ge 0, \text{ for all } v \in \Co^d.
\] 

The $\sgn$ operator is defined for $\alpha \in \mathbb{C}$ as 
\[
\sgn(\alpha) := 
\begin{cases}
\alpha/|\alpha|, & \alpha\not= 0,\\
0, & \text{otherwise}.
\end{cases}
\]
This operator is extended to any matrix space $\mathbb{C}^{d\times d^\prime}$ by  entrywise operation, i.e. for any $A\in\mathbb{C}^{d\times d^\prime}$
 we have 
$$
\sgn(A)=\big(\sgn(A_{n,m})\big)_{n,m=1}^{d,d^\prime}.
$$

Similarly to previous works, our analysis is based on a graph theoretic interpretation. Namely, the matrices $X$ and $\hat X$ can be seen as edge weight matrix of a weighted undirected graph $G=(V,E,W)$. Consequently, one has $|V|=d$, and we can identify $V$ with $[d]=\{1,\dots, d\}$.
The set of edges $E$ is naturally identified with the index set of the observed noisy angular differences introduced above. It directly follows from the problem setup that the weight function $W:V\times V\to [0,\infty)$ must satisfy $W(v,v^\prime)\geq 0,\ W(v,v^\prime)=W(v^\prime,v),\ W(v,v)=0$. 


To analyze this graph, we need some basic concepts from graph theory.
The adjacency matrix $A_G$ of $G$ is given by 
$$
(A_G)_{\ell,j}=\begin{cases} 1 & (\ell,j)\in E,\\ 0,& (\ell,j)\notin E,\end{cases}.
$$
With  this notation, one obtains the compact expression $X= A_G(xx^*)$.
In case $W\equiv 1$ on its support, i.e., $W=A_G$, we speak of $G$ as an unweighted graph. 

The degree of the vertex $\ell$ is defined as 
$$
\deg (\ell) := \sum_{(j,\ell)\in E} w_{\ell,j},
$$
and the corresponding degree matrix is the diagonal matrix 
$$
D=\diag\big(\deg(\ell)\big).
$$
The Laplacian of the graph $G$ is given by 
$$
L_G=D-W. 
$$
Observe that, as the graph is undirected, the Laplacian is symmetric. Moreover, since $w_{\ell,j}\geq 0$ we have 
$$
u^\ast L_G u=\frac{1}{2}\sum_{\ell,j} w_{\ell,j}\, |u_\ell - u_j|^2\geq 0
$$
for all $u\in\Co^d$. Hence the Laplacian is positive semidefinite and therefore has a spectrum consisting of non-negative real numbers, which we 
denote by $\lambda_j$ with indices $j$ arranged in ascending order, i.e., 
$$
0=\lambda_1\leq \lambda_2\leq\cdots\leq\lambda_d.
$$
Here the first equality follows from the observation  that the vector $\mathds{1}=(1,\dots,1)^T$ satisfies $L_G\mathds{1}=0$.
The spectral gap of $G$ is defined as $\tau_G=\lambda_2$. A graph $G$ is connected if and only if $\tau_G>0 $, see  \cite{Chung.1997}. In that case, the null space of $L_G$ is spanned by $\mathds{1}$. \\
Besides the Laplacian $L_G$ the normalized Laplacian $L_N$  of $G$ is often used. It is defined as 
$$
L_N=D^{-1/2}\, L_G\,  D^{-1/2}. 
$$
Its spectrum consists of non-negative real numbers as well and we  write $\tau_N$ for its second smallest eigenvalue $\lambda_2(L_N)$. 
 

The data dependent Laplacians associated to $X$ and $\hat{X}$ are defined as 
$$
L=D-W\circ X,\quad \text{ and }\quad \hat{L}=D-W\circ\hat{X}, \text{ respectively.} 
$$
Note that under the multiplicative noise model used in this paper, both these Laplacians are positive semidefinite matrices by Gershgorin's circle theorem.

The data dependent Laplacian $\hat L$ corresponding to the noisy observations allows for a compact representation of the least squares problem \eqref{eq: ls formulation} at the core of our recovery method. Indeed, observe that

\begin{equation}\label{eq: L formulation}
\min_{z \in \Tor^d} \frac{1}{2}\sum_{ (\ell, j ) \in E} w_{\ell,j} |z_\ell -  \est X_{\ell,j} z_j |^2
=
\min_{z \in \Tor^d} z^* (D - \est X \circ W) z
=
\min_{z \in \Tor^d} z^* \est L z.
\end{equation}
Due to the quadratic constraint $z\in\mathbb{T}^d$ the quadratic minimization problem \eqref{eq: L formulation} is non-convex and thus NP-hard in general. 
One way to obtain a feasible problem is to relax the constraint in \eqref{eq: L formulation} to $\Norm{z}{2}^2 = d$ and obtain 
\begin{equation}
\min_{ \Norm{z}{2}^2 = d} z^* \est L z.\label{eq: L eig}
\end{equation}
This is nothing else but the determination of the smallest eigenvalue of the matrix $\hat{L}$ and can be solved efficiently using Rayleigh quotient. We will 
refer to \eqref{eq: L eig} as eigenvector relaxation (ER). \\
An error bound for the ER based reconstruction was given by Iwen et. al. in \cite{Iwen.2020} for the case of unweighted graphs. 
Their proof is based on the Cheeger inequality that is only available for the normalized Laplacian, which is why the minimization problem in their theorem has a different normalization than \eqref{eq: L eig}. In the special case that $\deg(\ell)$ is a constant for all $\ell$ (as in \cite{Iwen.2020}), the two normalizations agree up to a constant. Using the terminology introduced above their result reads as follows.

\begin{theorem}[{\citep[Theorem 3]{Iwen.2020}, \citep[Theorem 4]{Preskitt.2018}}] \label{thm: weak bound}
Suppose that $G = (V,E)$ is an undirected connected and unweighted graph with $\tau_N > 0$. 
Let $\tilde{z} \in \Co^d$ be the minimizer of 
\[
\min_{\Norm{z}{2}^2 = d} z^* D^{-1/2} \est L D^{-1/2} z 
\] 
and let $\tilde{x} = \sgn(\tilde{z})$.
Then,
\begin{equation}
\min_{\theta \in [0, 2 \pi)} \Norm{\tilde{x} - e^{i\theta} \tx x}{2} \le 19 \frac{\Norm{\est X - \tx X}{F}}{\tau_N \sqrt{\min\limits_{i \in V} \deg(i) } }. \label{eq:prevworkunw}
\end{equation}
\end{theorem}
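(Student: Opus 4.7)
My plan is to combine the variational characterization of the smallest eigenvalue of $\hat L_N:=D^{-1/2}\hat L D^{-1/2}$ with the Davis--Kahan $\sin\Theta$ theorem and a component-wise sign-perturbation bound. The starting observation is that the clean data Laplacian $L=D-A_G\circ xx^*$ annihilates $x$: since $(A_G\circ xx^*)x_\ell=\deg(\ell)x_\ell$, one has $Lx=0$, and hence $D^{1/2}x$ spans the null space of $L_N^{\mathrm{data}}:=D^{-1/2}LD^{-1/2}$. Because $L_N^{\mathrm{data}}=\diag(x)\,L_N\,\diag(x^*)$ is unitarily similar to $L_N$, its second-smallest eigenvalue equals $\tau_N$. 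Set $v_1:=D^{1/2}x/\Norm{D^{1/2}x}{2}$ and rescale the ER minimizer to $\tilde z':=\sqrt{\tr(D)/d}\,\tilde z$, so that $\Norm{\tilde z'}{2}^2=\tr(D)=\Norm{D^{1/2}x}{2}^2$ while $\sgn(\tilde z')=\tilde x$ since the rescaling factor is positive.

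Testing $D^{1/2}x$ against the rescaled ER problem yields $\tilde z'^{\,*}\hat L_N\tilde z'\le x^*\hat L x$, which evaluates to $\tfrac12\Norm{X-\hat X}{F}^2$: using $Lx=0$ and the Hermiticity of $\hat L$ (so the quadratic form is real), $x^*\hat L x = \RE\,x^*(X-\hat X)x$ expands explicitly in the noise angles $\eta_{\ell,j}$ to half the squared Frobenius norm. The key quantitative input is the tight estimate $\Norm{Ev_1}{2}^2 \le \Norm{X-\hat X}{F}^2/\tr(D)$ for the perturbation $E:=D^{-1/2}(X-\hat X)D^{-1/2}$: writing $((X-\hat X)x)_\ell=x_\ell\sum_j (A_G)_{\ell,j}(1-e^{i\eta_{\ell,j}})$ and applying Cauchy--Schwarz entrywise produces a factor $\deg(\ell)$ that cancels the $D^{-1}$ weighting when summed. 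In the regime $\Norm{X-\hat X}{F}\le\sqrt{\tr(D)\tau_N}$ (outside of which the target inequality is vacuous, since then $19\Norm{X-\hat X}{F}/(\tau_N\sqrt{\min_\ell\deg(\ell)})\ge\sqrt{2d}$), one has $\lambda_1(\hat L_N)\le v_1^*\hat L_N v_1=\Norm{X-\hat X}{F}^2/(2\tr(D))\le\tau_N/2$, so the Davis--Kahan gap is at least $\tau_N/2$, giving $\Norm{\tilde z'^{\,\perp}}{2}\le 2\sqrt{\tr(D)}\,\Norm{Ev_1}{2}/\tau_N\le 2\Norm{X-\hat X}{F}/\tau_N$, where $\tilde z'^{\,\perp}$ is the component of $\tilde z'$ orthogonal to $v_1$.

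Choosing the optimal phase $\theta$ yields $\Norm{\tilde z'-e^{i\theta}D^{1/2}x}{2}^2\le 2\Norm{\tilde z'^{\,\perp}}{2}^2 \le 8\Norm{X-\hat X}{F}^2/\tau_N^2$ via the elementary inequality $1-\sqrt{1-s^2}\le s^2$. The final ingredient is the component-wise sign inequality $\Abs{\sgn(a)-\sgn(b)}\le 2\Abs{a-b}/\Abs{b}$ applied with $b=e^{i\theta}(D^{1/2}x)_\ell$, which has modulus $\sqrt{\deg(\ell)}\ge\sqrt{\min_\ell\deg(\ell)}$ and sign $e^{i\theta}x_\ell$; summing gives $\Norm{\tilde x-e^{i\theta}x}{2}^2\le(4/\min_\ell\deg(\ell))\Norm{\tilde z'-e^{i\theta}D^{1/2}x}{2}^2$, combining to $\Norm{\tilde x-e^{i\theta}x}{2}\le 4\sqrt{2}\,\Norm{X-\hat X}{F}/(\tau_N\sqrt{\min_\ell\deg(\ell)})$, well within the claimed constant $19$. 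The main obstacle is the tight bound on $\Norm{Ev_1}{2}$: the naive estimate via $\Norm{E}{\infty}\le\Norm{X-\hat X}{F}/\min_\ell\deg(\ell)$ loses a factor $\sqrt{d}$ and leads to a scaling quadratic rather than linear in $\Norm{X-\hat X}{F}$ after the square-root step; exploiting the graph structure through entry-wise Cauchy--Schwarz is essential to obtain the correct dependence on $\tau_N$ and $\min\deg$.
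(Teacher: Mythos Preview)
The paper does not give its own proof of this statement: Theorem~\ref{thm: weak bound} is quoted from \cite{Iwen.2020,Preskitt.2018}, and the paper explicitly remarks that the original argument ``is based on the Cheeger inequality that is only available for the normalized Laplacian.'' Your approach via Davis--Kahan is therefore genuinely different in spirit. Most of your auxiliary computations are correct: the identity $x^*\est L x=\tfrac12\Norm{X-\est X}{F}^2$, the unitary similarity $L_N^{\mathrm{data}}=\diag(x)L_N\diag(x^*)$, the Cauchy--Schwarz bound $\Norm{Ev_1}{2}^2\le\Norm{X-\est X}{F}^2/\tr(D)$, the vacuous-regime reduction, and the final sign-perturbation step all go through.

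The argument breaks down at the Davis--Kahan step. You bound $\lambda_1(\est L_N)\le\tau_N/2$ and declare the ``Davis--Kahan gap'' to be $\tau_N/2$, then combine this with the residual $\Norm{Ev_1}{2}$. But these two ingredients do not belong to the same version of the theorem. With the residual $\Norm{Ev_1}{2}=\Norm{(\est L_N-L_N^{\mathrm{data}})v_1}{2}$, the $\sin\Theta$ theorem requires the separation between $\lambda_1(L_N^{\mathrm{data}})=0$ and the \emph{rest of the spectrum of $\est L_N$}, i.e.\ the gap is $\lambda_2(\est L_N)$. Your inequality $\lambda_1(\est L_N)\le\tau_N/2$ says nothing about $\lambda_2(\est L_N)$; to bound that from below by $\tau_N/2$ you would need $\Norm{E}{\infty}\le\tau_N/2$ (via Weyl), and your regime $\Norm{X-\est X}{F}\le\sqrt{\tr(D)\tau_N}$ is far too weak to guarantee this (for a $k$-regular graph it would force $k\ge 2d$). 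The version of Davis--Kahan whose gap really is $\tau_N-\lambda_1(\est L_N)\ge\tau_N/2$ uses the residual $\Norm{E\hat v_1}{2}$ on the perturbed eigenvector, for which your entrywise Cauchy--Schwarz trick is unavailable. This mismatch is the essential obstruction, and it is also why your final constant $4\sqrt 2$ would be so much smaller than the $19$ in the cited result.
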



An alternative approach is based on the idea of lifting the problem to the matrix space. It makes use of the relation 
$$
z^*\hat{L}z=\sum_{\ell,j} z_\ell^*\, L_{\ell,j}\, z_j=\tr(\hat{L}\,  z z^\ast). 
$$
With this the minimization problem \eqref{eq: L formulation} transforms into 
\begin{align}\label{eq: L Lift}
\min_{ Z \in \hh^d } \tr( \est L Z) \nonumber \\
s.t.~ Z_{ii} = 1,\\
Z \succeq 0, \nonumber \\
\rank(Z) = 1. \nonumber
\end{align}
The class of minimization problems with explicit rank constraints is known to include many NP-hard instances \cite[Chapter 2]{Fazel.March2002}, so a common strategy is to perform a semi-definite relaxation. For \eqref{eq: L Lift}, the following relaxation has been proposed in \cite{Goemans.1995}.

\begin{align}\label{eq: L SDP}
\min_{ Z \in \hh^d } \tr( \est L Z) \nonumber \\
s.t.~ Z_{ii} = 1,\\
Z \succeq 0. \nonumber 
\end{align}
We will refer to this minimization problem as SDP. 
Note that if $Z$ meets the rank condition in \eqref{eq: L Lift} one obtains that $Z= z z^*$, where $z$ is a solution of \eqref{eq: L formulation}. Without the rank condition, however, the solution to \eqref{eq: L SDP} may have higher rank. In this case the methods outputs the phase factors corresponding to the entries of the eigenvector associated to the largest 
eigenvalue as an approximation for the solution of \eqref{eq: L formulation} \cite{So.2007}. \\

As it was mentioned before, the error bound is commonly derived for the solution of the LSP and then applied to the solution of the SDP when it has rank 
$1$. For unweighted graphs, a first result on recovery guarantees of the SDP has been established by Preskitt \cite{Preskitt.2018}. Adjusted to our 
terminology his result reads as follows.

\begin{theorem}[{\citep[Theorem 9]{Preskitt.2018}}]\label{thm: SDP}
Suppose that $G = (V,E)$ is an undirected and unweighted graph with $\tau_G>0$. 
Let $\tilde{x} \in  \Tor^d$ be the minimizer of the LSP \eqref{eq: L formulation}. 
Then,
$$
\min_{\theta \in [0, 2 \pi)} \Norm{\tilde{x} - e^{i\theta} \tx x}{2} \le 2 \frac{\Norm{\est X - \tx X}{F}}{\sqrt { \tau_G }}.
$$
If additionally inequality
$$
\Norm{\est X -  \tx X }{F} < \frac{\tau_G}{1 + \sqrt{d}}
$$
holds, then $\tilde{x} \tilde{x}^*$ is the minimizer of the SDP \eqref{eq: L SDP}.
\end{theorem}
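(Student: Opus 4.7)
To prove the first inequality, I would combine the optimality of $\tilde{x}$ for the LSP with the edge-wise quadratic form representation
$$u^* \est{L} u = \tfrac{1}{2}\sum_{(\ell,j)\in E}|u_\ell - \est X_{\ell,j} u_j|^2,$$
valid since the graph is unweighted. Substituting $u = x$ and noting that $x_\ell - X_{\ell,j} x_j = 0$, the residuals collapse to pure noise, yielding $x^* \est L x = \tfrac{1}{2}\Norm{\est X - X}{F}^2$. Applying an $\ell^2$ triangle inequality to compare the $X$- and $\est X$-residuals of $\tilde x$, together with the optimality bound $\tilde x^* \est L \tilde x \le x^* \est L x$, yields $\tilde x^* L \tilde x \le 2\Norm{\est X - X}{F}^2$. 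Next, exploiting that $\diag(x)$ is unitary (since $|x_i| = 1$) and conjugates $L_G$ into $L$, one has $\ker L = \spn(x)$ and $\lambda_2(L) = \tau_G$; decomposing $\tilde x = \alpha x + w$ with $w \perp x$ gives $\tilde x^* L \tilde x \ge \tau_G \Norm{w}{2}^2$, and a short calculation shows $\Norm{w}{2}^2 \ge \tfrac{1}{2} \min_\theta \Norm{\tilde x - e^{i\theta} x}{2}^2$, closing the first part with the claimed constant $2$.

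For the SDP tightness I would construct a standard dual certificate. Stationarity of the LSP Lagrangian at $\tilde x \in \Tor^d$ yields $\est L \tilde x = \diag(\mu^*) \tilde x$ with $\mu^*_i = (\est L \tilde x)_i \tilde x_i^* \in \R$; setting $S := \diag(\mu^*)$, one has $(\est L - S)\tilde x = 0$, so $\tilde x \tilde x^*$ is primal feasible and complementary slackness is automatic once $\est L - S \succeq 0$ is established. If moreover $\ker(\est L - S)$ is spanned only by $\tilde x$, then any optimal SDP matrix must be of the form $c \tilde x \tilde x^*$, and the unit-modulus diagonal constraint forces $c = 1$, giving uniqueness.

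The main obstacle is thus to verify $\lambda_2(\est L - S) > 0$ under the noise hypothesis. Since $\est L - S = L - (N + S)$ with $N := \est X - X$, Weyl's inequality yields $\lambda_2(\est L - S) \ge \tau_G - \Norm{N + S}{\infty}$. Because $N$ has vanishing diagonal and $S$ is diagonal, the cross term disappears in the Frobenius inner product and $\Norm{N + S}{F}^2 = \Norm{N}{F}^2 + \Norm{\mu^*}{2}^2$, so it remains to control $\Norm{\mu^*}{2} = \Norm{\est L \tilde x}{2}$. I would handle this via $\Norm{\est L \tilde x}{2}^2 \le \Norm{\est L}{\infty}\cdot \tilde x^* \est L \tilde x$, combined with $\tilde x^* \est L \tilde x \le \tfrac{1}{2}\Norm{N}{F}^2$ from the first part and the Gershgorin estimate $\Norm{\est L}{\infty} \le 2\max_i \deg(i) \le 2(d-1)$, yielding $\Norm{\mu^*}{2} \le \sqrt{d}\,\Norm{N}{F}$. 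Putting everything together, $\Norm{N + S}{\infty} \le \Norm{N + S}{F} \le \sqrt{1+d}\,\Norm{N}{F} \le (1+\sqrt{d})\Norm{N}{F}$, and the hypothesis $\Norm{\est X - X}{F} < \tau_G/(1+\sqrt d)$ forces $\lambda_2(\est L - S) > 0$, closing the argument.
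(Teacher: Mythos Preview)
Your argument for the error bound is correct and matches the paper's approach for the (more general) Theorem~\ref{thm: weighted new} almost line by line: the conjugation $L=\diag(x)\,L_G\,\diag(x)^*$, the orthogonal decomposition $\tilde x=\alpha x+w$, and the optimality estimate $\tilde x^*\hat L\tilde x\le x^*\hat L x=\tfrac12\|\hat X-X\|_F^2$ are exactly the ingredients used there. The only cosmetic difference is that you pass from $\hat X$-residuals to $X$-residuals via Minkowski, while the paper uses $(\alpha+\beta)^2\le 2\alpha^2+2\beta^2$; both yield the same bound $\tilde x^*L\tilde x\le 2\|\hat X-X\|_F^2$.

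For the SDP tightness, the paper does not give a self-contained argument: Theorem~\ref{thm: SDP} is quoted from \cite{Preskitt.2018}, and in the proof of Theorem~\ref{thm: weighted new} the corresponding statement is obtained by invoking the black-box Lemma~\ref{l: 16} (Preskitt's Lemma~16), whose hypothesis is phrased in terms of $\|\hat L-\tilde L\|_F$, i.e.\ involving the \emph{minimizer} $\tilde x$ rather than the ground truth $x$. Your route is genuinely different and more informative: you build the dual certificate $S=\diag(\mu^*)$ from the KKT conditions, then control the perturbation $N+S$ directly in terms of $\|\hat X-X\|_F$ via the chain $\|\mu^*\|_2^2=\|\hat L\tilde x\|_2^2\le\|\hat L\|_\infty\,\tilde x^*\hat L\tilde x\le (d-1)\|N\|_F^2$ (using $\hat L\succeq 0$) and the disjoint-support identity $\|N+S\|_F^2=\|N\|_F^2+\|\mu^*\|_2^2$. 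This yields the hypothesis of Theorem~\ref{thm: SDP} exactly as stated, in terms of the verifiable quantity $\|\hat X-X\|_F$. One small point worth making explicit in your write-up: once you have $\lambda_2(\hat L-S)>0$, the fact that $(\hat L-S)\tilde x=0$ forces $\lambda_1(\hat L-S)=0$ (otherwise $0$ would not appear in the spectrum), so $\hat L-S\succeq 0$ with one-dimensional kernel follows automatically; you rely on this but do not spell it out.
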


For a detailed comparison of Theorems \ref{thm: weak bound} and \ref{thm: SDP}, we refer reader to Section 4.3.2 of \cite{Preskitt.2018}. 


The first results addressing a generalization to the important case of weighted graphs have been derived by Preskitt \cite{Preskitt.2018}. The following formulations have again been adjusted to our notation. 



\begin{theorem}[{\citep[Proposition 12 and Theorem 8]{Preskitt.2018}}]\label{thm: SDP weighted}
Suppose that $G = (V,E,W)$ weighted graph with $\tau_G>0$. 
Let $\tilde{x} \in  \Tor^d$ be the minimizer of the LSP \eqref{eq: L formulation}.
Then,
\begin{equation}\label{eq: SqrtEst} \tag{2.3.A}
\min_{\theta \in [0, 2 \pi)} \Norm{\tilde{x }- e^{i\theta} \tx x}{2} \le 2 \sqrt{ \frac{d \Norm{ W \circ (\est X - \tx X) }{\infty}}{ \tau_G } },
\end{equation}
and 
\begin{equation}\label{eq:prevworkweights} \tag{2.3.B}
\min_{\theta \in [0, 2 \pi)} \Norm{\tilde{x} - e^{i\theta} \tx x}{2} \le 4 \sqrt{d}\,  \frac{\Norm{ W \circ (\est X - \tx X)}{\infty}}{ \tau_G }.
\end{equation}
If additionally inequality 
$$
\Norm{W \circ (\est X - \tilde{x} \tilde{x}^*) }{\infty} < \frac{\tau_G}{1 + \sqrt{d}}
$$
holds, then $\tilde{x} \tilde{x}^*$ is the minimizer of the SDP \eqref{eq: L SDP}.
\end{theorem}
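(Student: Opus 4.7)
My plan is to treat the two error bounds and the SDP tightness claim in sequence, drawing on a common starting observation. Let $E := W\circ(\hat X - X)$, so that $\hat L = L - E$, and note that a direct computation using $w_{\ell\ell}=0$ together with $w_{\ell j}X_{\ell j} = w_{\ell j}x_\ell x_j^*$ shows that $Lx = 0$; hence $x^*\hat L x = -x^*Ex$. Since $\tilde x$ minimizes \eqref{eq: L formulation} over $\Tor^d$ and both $x$ and $\tilde x$ are feasible,
$$\tilde x^*L\tilde x = \tilde x^*\hat L\tilde x + \tilde x^*E\tilde x \;\le\; x^*\hat L x + \tilde x^*E\tilde x \;=\; \tilde x^*E\tilde x - x^*Ex.$$
To lower bound the left-hand side I use the conjugation identity $L = \diag(x)\,L_G\,\diag(x)^*$. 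Setting $\tilde v := \diag(x)^*\tilde x \in\Tor^d$ and $\bar v := \tfrac{1}{d}\langle\mathds{1},\tilde v\rangle$, and rotating $\tilde x$ by a global phase so that $\bar v\in[0,1]$ (which preserves both the LSP value and the distance $d(x,\tilde x)$), the spectral gap of $L_G$ on $\mathds{1}^\perp$ yields $\tilde x^*L\tilde x = \tilde v^*L_G\tilde v \ge \tau_G\|\tilde v - \bar v\mathds{1}\|_2^2 = \tau_G d(1-\bar v^2)$, together with the identity $d(x,\tilde x)^2 = 2d(1-\bar v)$.

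The bound \eqref{eq: SqrtEst} then follows at once from the crude estimate $|\tilde x^*E\tilde x - x^*Ex|\le 2d\|E\|_\infty$ combined with $d(x,\tilde x)^2 \le 2d(1-\bar v^2)$. For the sharper bound \eqref{eq:prevworkweights}, I would decompose the residual $r := \tilde x - x = (\bar v-1)x + r_\perp$ with $r_\perp\perp x$ and $\|r_\perp\|_2^2 = d(1-\bar v^2)$, and expand
$$\tilde x^*E\tilde x - x^*Ex \;=\; (\bar v^2-1)\,x^*Ex + 2\bar v\,\RE(r_\perp^*Ex) + r_\perp^*Er_\perp.$$
Estimating each term via the spectral norm of $E$ produces an upper bound of $2d\|E\|_\infty\sqrt{1-\bar v^2}\,(\bar v + \sqrt{1-\bar v^2})$. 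The elementary Cauchy--Schwarz estimate $\bar v + \sqrt{1-\bar v^2}\le\sqrt{2}$ combined with the spectral-gap lower bound gives $\tau_G\sqrt{1-\bar v^2}\le 2\sqrt{2}\,\|E\|_\infty$, and then $d(x,\tilde x)\le\sqrt{2d(1-\bar v^2)}$ produces exactly $4\sqrt{d}\,\|E\|_\infty/\tau_G$.

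For the SDP tightness I plan a dual certificate argument. Let $\tilde E := W\circ(\hat X - \tilde x\tilde x^*)$. Differentiating the phase-parametrized quadratic form at $\tilde x$ shows that $2\IM\big(\tilde x_k^*(\hat L\tilde x)_k\big)$ is the $k$-th partial derivative, so first-order optimality forces $\lambda_k := \tilde x_k^*(\hat L\tilde x)_k \in \R$ for every $k$. By construction the matrix $S := \hat L - \diag(\lambda)$ satisfies $S\tilde x = 0$ and $\tilde x^*\hat L\tilde x = \sum_k\lambda_k$, so if $S\succeq 0$, weak SDP duality gives $\tr(\hat L Z) \ge \sum_k\lambda_k = \tilde x^*\hat L\tilde x$ for every primal-feasible $Z$, proving optimality of $\tilde x\tilde x^*$; strict positivity of $S$ on $\tilde x^\perp$ gives uniqueness. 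To verify $S\succeq 0$, conjugate by $\diag(\tilde x)^*$: a short calculation gives $\diag(\tilde x)^*(W\circ\hat X)\diag(\tilde x) = W + \tilde E'$ where $\tilde E' := \diag(\tilde x)^*\tilde E\diag(\tilde x)$ has the same spectral norm as $\tilde E$, so $S' := \diag(\tilde x)^* S\diag(\tilde x) = L_G - \tilde E' - \diag(\lambda)$, and $S'\mathds{1}=0$ forces $\lambda = -\tilde E'\mathds{1}$, hence $\|\lambda\|_\infty \le \|\lambda\|_2 \le \sqrt{d}\,\|\tilde E\|_\infty$. The spectral gap $L_G|_{\mathds{1}^\perp}\succeq\tau_G I$ together with $\|\tilde E'\|_\infty + \|\diag(\lambda)\|_\infty \le (1+\sqrt{d})\|\tilde E\|_\infty$ then delivers exactly the hypothesis $\|\tilde E\|_\infty < \tau_G/(1+\sqrt{d})$.

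The hardest part is the sharp constant $4$ in \eqref{eq:prevworkweights}: a careless triangle inequality over-counts by a factor of $\sqrt{2}$, and the right tactic is the specific decomposition $r = (\bar v-1)x + r_\perp$ combined with the two-variable Cauchy--Schwarz estimate $\bar v + \sqrt{1-\bar v^2}\le\sqrt{2}$. A secondary technical point is the first-order optimality calculation in the SDP part, where one must carefully differentiate the quadratic form in the phase parameters to conclude realness of the proposed $\lambda$.
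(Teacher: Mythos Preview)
The paper does not actually prove this theorem: it is quoted as a prior result of Preskitt, and the SDP-tightness clause reappears (again without proof) as the cited Lemma~\ref{l: 16} inside the proof of Theorem~\ref{thm: weighted new}. Your argument is correct and self-contained. The spectral-gap lower bound you use---writing $L=\diag(x)\,L_G\,\diag(x)^*$ and projecting $\diag(x)^*\tilde x$ onto $\mathds{1}^\perp$---is exactly the machinery the paper deploys in its own proof of Theorem~\ref{thm: weighted new} (there in service of a Frobenius-norm bound rather than a spectral-norm one). What is genuinely additional in your write-up is (i) the sharp treatment of $\tilde x^*E\tilde x - x^*Ex$ via the orthogonal decomposition $r=(\bar v-1)x+r_\perp$ together with the two-term Cauchy--Schwarz estimate $\bar v+\sqrt{1-\bar v^2}\le\sqrt2$, which recovers the exact constant $4$ in \eqref{eq:prevworkweights}, and (ii) a complete dual-certificate argument for the tightness claim, which the paper only invokes as a black box. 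Both parts are clean and fill gaps the paper leaves to its reference.
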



As the square root in \eqref{eq: SqrtEst} produces slow convergence as the noise diminishes, i.e. $\hat{X}$ approaches $X$, in the many cases bound \eqref{eq:prevworkweights} outperforms \eqref{eq: SqrtEst}. For unweighted graphs, as we numerically explore in Section \ref{sec: numerics}, the bounds of Theorem \ref{eq:prevworkweights} 
are similar to those of
Theorem \ref{thm: SDP} and superior to the bounds of Theorem \ref{thm: weak bound} for ER in many cases. They are, however, only valid for SDP when the relaxation is tight,
which is only guaranteed when the weighted noise on the phase factor matrix is bounded by $\tau_G/(1 + \sqrt d)$. As the spectral gap $\tau_G$ is typically rather small, as compared to the dimension $d$, tightness is guaranteed only for very small noise levels. In fact, our numerical simulations in Section \ref{sec: numerics} show that the SDP relaxation is indeed not tight in many cases. In constrast, the recovery guarantees for ER provided by Theorem \ref{thm: weak bound} for the unweighted case are applicable independently of the tightness of the relaxation.

\section{Improved error bounds}\label{sec: results}

The main contribution of this paper concerns recovery guarantees for weighted angular synchronization via eigenvector relaxation, which are often stronger than even the best known bounds for the unrelaxed problem and do not require any a priori bound for the error to ensure tightness of the relaxation. Along the way, we derive similar error bounds for the solution of the least squares problem, which are exactly analogous to those provided by Theorem \ref{thm: SDP} in the unweighted case.
The superior scaling of our error bounds as compared to Theorem \ref{thm: SDP weighted} is also confirmed by numerical simulations in Section~\ref{sec: numerics}. We first state our result in general form, before discussing three special cases of interest.

\begin{theorem}\label{thm: weighted new}
Suppose that $G = (V ,E,W)$ is a weighted graph with $\tau_G>0$. 
Let $\tilde{x} \in  \Tor^d$ be the minimizer of the LSP \eqref{eq: L formulation} and $z$ be the minimizer of the ER $\eqref{eq: L eig}$.
Set $R\in\mathbb{C}^{d\times d}$ as $R_{\ell,j}=W_{\ell,j}^{1/2}$ . 
Then,
\begin{equation}\label{eq: Est1}
\min_{\theta \in [0, 2 \pi)} \Norm{\tilde{x}- e^{i\theta} \tx x}{2} \le 2\ \frac{\Norm{R \circ (\est X - \tx X)}{F}}{ \sqrt{ \tau_G } },
\end{equation}
and
\begin{equation}\label{eq: Est2}
\min_{\theta \in [0, 2 \pi)} \Norm{\sgn(z) - e^{i\theta} \tx x}{2} \le 2\ \frac{ c_z \ \Norm{R \circ (\est X - \tx X)}{F}}{ \sqrt{ \tau_G } },
\end{equation}
with $c_z := \sqrt{2 + 2 \Norm{z}{\infty}^2 }$. 
If additionally inequality
$$
\Norm{ W \circ (\est X - \tilde{x} \tilde{x}^*) }{\infty} < \frac{\tau_G}{1 + \sqrt{d}}
$$
holds, then $\tilde{x}\tilde{x}^*$ is the minimizer of \eqref{eq: L SDP}.
\end{theorem}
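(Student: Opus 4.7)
The plan is to exploit the spectral structure of the noiseless data-dependent Laplacian $L = D - W\circ X$. A key observation is that $L = D_x L_G D_x^*$ with $D_x = \diag(x)$, so $L$ is unitarily similar to $L_G$ and hence shares its spectrum; in particular $\ker L = \spn\{x\}$, one has the quadratic form $u^* L u = \tfrac12\sum_{(\ell,j)\in E} w_{\ell,j}\,|u_\ell - X_{\ell,j}u_j|^2$, and the Poincar\'e-type inequality $v^*Lv \ge \tau_G\Norm{v}{2}^2$ holds whenever $v\perp x$. The noisy Laplacian $\est L$ admits the same quadratic-form expression with $X$ replaced by $\est X$.

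For the LSP bound \eqref{eq: Est1}, I would first replace $x$ by $e^{i\theta^*}x$ at the optimal $\theta^*$, so that $\alpha := \langle\tilde x,x\rangle/d\ge 0$, and decompose $\tilde x = \alpha x + w$ with $w\perp x$. Direct computation gives $\Norm{\tilde x - x}{2}^2 = 2(1-\alpha)d$ and $\Norm{w}{2}^2 = (1-\alpha^2)d$, hence $\Norm{\tilde x - x}{2}^2 \le 2\Norm{w}{2}^2 \le 2\,\tilde x^* L \tilde x/\tau_G$. It then suffices to show $\tilde x^* L \tilde x \le 2\Norm{R\circ(\est X - X)}{F}^2$. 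Applying $(a+b)^2\le 2a^2+2b^2$ to the splitting $\tilde x_\ell - X_{\ell,j}\tilde x_j = (\tilde x_\ell - \est X_{\ell,j}\tilde x_j) + (\est X - X)_{\ell,j}\tilde x_j$ inside the quadratic form, and using $|\tilde x_j|=1$, yields $\tilde x^*L\tilde x \le 2\,\tilde x^*\est L\tilde x + \Norm{R\circ(\est X - X)}{F}^2$. The identity $x^*\est L x = \tfrac12\Norm{R\circ(\est X - X)}{F}^2$ (which follows from $Lx=0$ together with $|x_\ell - \est X_{\ell,j}x_j| = |(\est X - X)_{\ell,j}|$) combined with LSP optimality $\tilde x^*\est L\tilde x \le x^*\est L x$ then completes the argument.

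For the ER bound \eqref{eq: Est2}, the identical strategy applies to $z$ on the sphere $\Norm{z}{2}^2 = d$: the decomposition $z = \beta x + w$ with $w\perp x$ and $\beta\ge 0$ (after a global-phase rotation) yields $\Norm{z-x}{2}^2 \le 2\Norm{w}{2}^2 \le 2\,z^*Lz/\tau_G$. The single change is that in the triangle step $|z_j|$ no longer equals $1$; replacing it by $\Norm{z}{\infty}$ gives $z^*Lz \le 2\,z^*\est Lz + \Norm{z}{\infty}^2\Norm{R\circ(\est X - X)}{F}^2$, and ER optimality $z^*\est Lz \le x^*\est Lx$ produces $\Norm{z-x}{2}^2 \le c_z^2\Norm{R\circ(\est X - X)}{F}^2/\tau_G$. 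To pass from $z$ to $\sgn(z)$ I would invoke the elementary pointwise estimate $|\sgn(z_\ell) - z_\ell| = \bigl||z_\ell|-1\bigr| \le |z_\ell - x_\ell|$ (the closest point on the unit circle to $z_\ell$ is $\sgn(z_\ell)$), whence $\Norm{\sgn(z)-z}{2} \le \Norm{z-x}{2}$ and therefore $\Norm{\sgn(z)-x}{2} \le 2\Norm{z-x}{2}$, which gives \eqref{eq: Est2}.

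The SDP-tightness claim requires no new argument: the hypothesis $\Norm{W\circ(\est X - \tilde x\tilde x^*)}{\infty} < \tau_G/(1+\sqrt d)$ coincides exactly with that of Theorem~\ref{thm: SDP weighted}, established by Preskitt via a dual-certificate construction, so it can simply be invoked. I expect the main obstacle to be keeping the constants sharp in the ER part, since matching the precise value $c_z = \sqrt{2+2\Norm{z}{\infty}^2}$ requires the $\Norm{z}{\infty}$-dependent triangle step, the spectral-gap reduction $\Norm{z-x}{2}^2\le 2\Norm{w}{2}^2$, and the sgn-projection estimate $\Norm{\sgn(z)-x}{2}\le 2\Norm{z-x}{2}$ to all fit together without slack.
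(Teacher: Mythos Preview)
Your proposal is correct and follows essentially the same route as the paper. Both arguments hinge on (i) the unitary conjugation $L = D_x L_G D_x^*$ giving the Poincar\'e inequality $\|z-e^{i\vartheta}x\|_2^2 \le \tfrac{2}{\tau_G} z^*Lz$ via the orthogonal decomposition $z = \beta x + w$, (ii) the same triangle-inequality split $z_\ell - X_{\ell,j}z_j = (z_\ell - \est X_{\ell,j}z_j) + (\est X - X)_{\ell,j}z_j$ together with $(a+b)^2\le 2a^2+2b^2$, (iii) the optimality inequalities $\tilde x^*\est L\tilde x \le x^*\est Lx$ and $z^*\est Lz \le x^*\est Lx$ combined with the identity $x^*\est Lx = \tfrac12\|R\circ(\est X - X)\|_F^2$, and (iv) the pointwise estimate $|\sgn(z_\ell)-x_\ell|\le 2|z_\ell - x_\ell|$ to pass from $z$ to $\sgn(z)$; the paper merely rewrites steps (ii)--(iii) in the conjugated variables $h_\ell = x_\ell^* z_\ell$, $\Lambda_{\ell,j} = X_{\ell,j}^*\est X_{\ell,j}$, but the computations and constants are identical.
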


We note that $\Norm{R \circ (\est X - \tx X)}{F}$ can be estimated from above as 
\begin{equation}\label{eq: Rem}
\Norm{R \circ (\est X - \tx X)}{F} \le \sqrt{ \Norm{W \circ (\est X - \tx X)}{F} \ \Norm{ \est X - \tx X}{F} }.
\end{equation}

The first consequence of interest concerns the unweighted case, where the noise norm in Theorem \ref{thm: weighted new} 
simplifies to $\Norm{\est X - \tx X}{F}$ as in Theorem \ref{thm: SDP}. 

For LSP, Theorems \ref{thm: weighted new} and \ref{thm: SDP} yield the exact same bound, the bounds in Theorem \ref{thm: weighted new} for ER only differ by multiplicative factor $c_z$, which is $2$ when relaxation is tight and $\sqrt{2+2d}$ in the worst case. 


\begin{corollary}\label{cor: eig unweighted}
Suppose $G = (V,E)$ is an unweighted graph with $\tau_G>0$. Let $z \in \Co^d$ be the minimizer of the ER \eqref{eq: L eig}.
Then
$$
\min_{\theta \in [0, 2 \pi)} \Norm{\sgn(z) - e^{i\theta} \tx x}{2} \le 2 \frac{ c_z \ \Norm{\tx X - \est X}{F}}{\sqrt{\tau_G} }.
$$
\end{corollary}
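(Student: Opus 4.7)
The plan is to derive Corollary \ref{cor: eig unweighted} as a direct specialization of bound \eqref{eq: Est2} in Theorem \ref{thm: weighted new}. The only content of the corollary beyond that theorem is the simplification of the noise term $\Norm{R \circ (\est X - \tx X)}{F}$ in the unweighted setting, so the main work is to verify that this expression collapses to $\Norm{\tx X - \est X}{F}$.

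First, I would observe that by the definition of an unweighted graph given earlier in the paper, the weight matrix coincides with the adjacency matrix, $W = A_G$, and hence $W_{\ell,j} \in \{0,1\}$ for every pair $(\ell,j)$. Consequently, the entrywise square root $R$ defined in Theorem \ref{thm: weighted new} satisfies $R_{\ell,j} = W_{\ell,j}^{1/2} = W_{\ell,j}$, so $R = W = A_G$ in this case.

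Next, I would use the fact that both $\tx X$ and $\est X$ are, by construction in Section~\ref{sec: prev results}, supported on the edge set $E$, i.e.\ they vanish at every entry $(\ell,j)\notin E$. Since $A_G$ is the indicator of $E$, the Hadamard product acts as the identity on matrices supported on $E$, giving $R \circ (\est X - \tx X) = \est X - \tx X$. In particular,
$$
\Norm{R \circ (\est X - \tx X)}{F} = \Norm{\est X - \tx X}{F}.
$$

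Finally, I would apply inequality \eqref{eq: Est2} of Theorem \ref{thm: weighted new}, which is valid under the hypothesis $\tau_G > 0$ assumed in the corollary, and substitute the identity above to obtain exactly the stated bound. There is no real obstacle here; the whole argument is the bookkeeping observation that the weighted Frobenius norm in Theorem \ref{thm: weighted new} degenerates to the ordinary Frobenius norm when all edge weights are $0$ or $1$.
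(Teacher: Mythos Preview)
Your proposal is correct and matches the paper's own proof essentially line for line: the paper also reduces the corollary to the observation that, for an unweighted graph, $w_{\ell,j}=1$ on $E$ and both $\tx X,\est X$ vanish off $E$, so $\Norm{R\circ(\est X-\tx X)}{F}=\Norm{\est X-\tx X}{F}$, after which \eqref{eq: Est2} gives the claim.
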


Our next examples are related to the ptychography problem. 
We remind the reader that the goal of ptychography is to estimate the a signal $y \in \Co^d$ from phaseless measurements through localized masks. A recent method for recovering the signal $y$ from such observation is the BlockPR algorithm by Iwen et al.~\cite{Iwen.2020}, see \cite{Preskitt.2018, Melnyk.7820197122019,Perlmutter.7242019,  Forstner.2020} for follow-up works developing this algorithm further that also rely on weighted angular synchronization. The BlockPR algorithm proceeds by combining neighboring masks to obtain estimates for the products of entries located close to each other. In mathematical terms, this procedure yields an approximation of the squared absolute values of the entries (so these can be assumed to be approximately known) and a noisy version of $T_\delta( y y^*)$, where $\delta$ is the size of the mask and $T_\delta$ is the restriction operator mapping a matrix to its entries indexed by the set 
\begin{equation}\label{eq: E delta}
E_\delta =\{ (\ell,j) ~|~ \ell \neq j\in [d], \text{ and }  \left| \ell - j \right| < \delta \text{ or } |\ell-j| > d - \delta \}
\end{equation}
corresponding to the $2\delta-1$ central sub- and superdiagonals, excluding the entries on the main diagonal. 
Thus the resulting measurements exactly correspond to \eqref{eq: Y tilde} for $E=E_\delta$, which is why weighted angular synchronization is the natural method of choice. The weights in this problem are given by the matrix $yy^*$ restricted to the index set $E$, which yields the setup of the following corollary.


\begin{corollary}\label{cor: weights normal new}
Consider a weighted graph $G=(V,E,W)$ with whose weight matrix $W$ is defined as follows.
Let $y\in\mathbb{C}^d$ with $\sgn(y)=x$. Define matrices $Y=(I+A_G)\circ y\, y^\ast $ and $\tx X = \sgn(\tx Y)$. Let $M$ and $\est X$ be the matrices containing the perturbed magnitudes and phases of $Y$, respectively, so that $M \approx |\tx Y|$ and $\est X = \tx X \circ N$ and set $\hat{Y}= M \circ \est X$. Consider the weight matrix $W$ with entries given by $w_{\ell,j}=|\hat{Y}_{\ell,j}|\, (1-\delta_{\ell,j})$ and assume that $\tau_G>0$. Let $\tilde{x}$ be a minimizer of \eqref{eq: L formulation} and 
let $z$ be the minimizer of \eqref{eq: L eig}. Then we have   
$$
\min_{\theta \in [0, 2 \pi)} \Norm{\tilde{x} - e^{i\theta} \tx x}{2} \le 2 \sqrt2 \frac{ \sqrt{ \Norm{\est Y - \tx Y}{F} \ \Norm{\est X - \tx X}{F} }}{ \sqrt{ \tau_G } },
$$
and
$$
\min_{\theta \in [0, 2 \pi)} \Norm{\sgn(z) - e^{i\theta} \tx x}{2} \le  2 \sqrt 2 \frac{c_z \ \sqrt{  \Norm{\est Y - \tx Y}{F} \ \Norm{\est X - \tx X}{F} } }{ \sqrt{ \tau_G } }.
$$

\end{corollary}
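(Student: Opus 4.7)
\textbf{Proof proposal for Corollary \ref{cor: weights normal new}.}

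The plan is to apply Theorem \ref{thm: weighted new} with the specific weight matrix $W$ given by $w_{\ell,j}=|\est Y_{\ell,j}|(1-\delta_{\ell,j})$, so that $R_{\ell,j}=|\est Y_{\ell,j}|^{1/2}$ for $\ell\neq j$ (and zero on the diagonal). Both bounds in Theorem \ref{thm: weighted new} are then controlled by $\Norm{R\circ(\est X-\tx X)}{F}$, and by \eqref{eq: Rem} this quantity is dominated by $\sqrt{\Norm{W\circ(\est X-\tx X)}{F}\Norm{\est X-\tx X}{F}}$. Thus the crux of the argument is to show that
\begin{equation*}
\Norm{W\circ(\est X-\tx X)}{F}\le 2\,\Norm{\est Y-\tx Y}{F},
\end{equation*}
which, combined with \eqref{eq: Rem} and Theorem \ref{thm: weighted new}, immediately yields the two stated bounds with the factor $2\sqrt{2}$.

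To establish this entrywise comparison, I would work one entry at a time. For $(\ell,j)\in E$ one has $|\est Y_{\ell,j}|=M_{\ell,j}$, $|\tx Y_{\ell,j}|=|y_\ell y_j|$, and by construction $\est Y_{\ell,j}=|\est Y_{\ell,j}|\est X_{\ell,j}$, $\tx Y_{\ell,j}=|\tx Y_{\ell,j}|\tx X_{\ell,j}$. Writing
\begin{equation*}
|\est Y_{\ell,j}|\bigl(\est X_{\ell,j}-\tx X_{\ell,j}\bigr)=\bigl(\est Y_{\ell,j}-\tx Y_{\ell,j}\bigr)+\bigl(|\tx Y_{\ell,j}|-|\est Y_{\ell,j}|\bigr)\tx X_{\ell,j},
\end{equation*}
applying the triangle inequality, the reverse triangle inequality $\bigl||\est Y_{\ell,j}|-|\tx Y_{\ell,j}|\bigr|\le|\est Y_{\ell,j}-\tx Y_{\ell,j}|$, and the fact that $|\tx X_{\ell,j}|\le 1$ gives
\begin{equation*}
\bigl|(W\circ(\est X-\tx X))_{\ell,j}\bigr|\le 2\,|\est Y_{\ell,j}-\tx Y_{\ell,j}|.
\end{equation*}
Outside $E$ (and on the diagonal) both sides are zero, so squaring and summing over $(\ell,j)$ gives the desired Frobenius inequality.

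Putting the pieces together, \eqref{eq: Rem} yields $\Norm{R\circ(\est X-\tx X)}{F}\le\sqrt{2}\,\sqrt{\Norm{\est Y-\tx Y}{F}\Norm{\est X-\tx X}{F}}$, and substituting this into \eqref{eq: Est1} and \eqref{eq: Est2} of Theorem \ref{thm: weighted new} gives the two stated bounds. The only real obstacle is the entrywise identity and its triangle-inequality estimate described above; everything else is bookkeeping. A minor sanity check is that the weights $w_{\ell,j}=|\est Y_{\ell,j}|$ are indeed nonnegative and symmetric (the graph $G$ is undirected and $\est Y$ inherits Hermitian symmetry from the ptychography construction), so $W$ is a valid weight matrix and Theorem \ref{thm: weighted new} truly applies.
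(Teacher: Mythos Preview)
Your proof is correct and follows essentially the same route as the paper's: both apply Theorem~\ref{thm: weighted new} together with \eqref{eq: Rem} and reduce the problem to showing $\Norm{W\circ(\est X-\tx X)}{F}\le 2\Norm{\est Y-\tx Y}{F}$, which is then obtained via the triangle and reverse triangle inequalities. The only cosmetic difference is that the paper phrases the key decomposition at the matrix level (introducing the auxiliary weight matrix $W_0$ with $(W_0)_{\ell,j}=|Y_{\ell,j}|(1-\delta_{\ell,j})$ and writing $W\circ\est X - W\circ\tx X = (\est Y - \tx Y) + (W_0-W)\circ\tx X$ off the diagonal), whereas you carry out the identical computation entrywise; one small inaccuracy is that on the diagonal the right-hand side $|\est Y_{\ell,\ell}-\tx Y_{\ell,\ell}|$ need not vanish, but this only helps the inequality.
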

In the next statement the set-up is analogous to the previous corollary but instead of having weights defined by $|\hat{Y}_{\ell,j}|$ we work with  
$|\est Y_{\ell,j}|^2$. 
\begin{corollary}\label{cor: squared weights}
Consider a weighted graph $G=(V,E,W)$ whose weight matrix $W$ is defined as follows.
Let $y\in\mathbb{C}^d$ with $\sgn(y)=x$. Define matrices $Y=(I+A_G)\circ y\, y^\ast $ and $\tx X = \sgn(\tx Y)$. Let $M$ and $\est X$ be the matrices containing the perturbed magnitudes and phases of $Y$, respectively, so that $M \approx |\tx Y|$ and $\est X = \tx X \circ N$ and set $\hat{Y}= M \circ \est X$. Consider the weight matrix $W$ with entries given by $w_{\ell,j}=|\hat{Y}_{\ell,j}|^2\, (1-\delta_{\ell,j})$ and assume that $\tau_G>0$. Let $\tilde{x}$ be a minimizer of \eqref{eq: L formulation} and let 
$z$ be the minimizer of  \eqref{eq: L eig}. 
Then we have  
$$
\min_{\theta \in [0, 2 \pi)} \Norm{\tilde{x} - e^{i\theta} \tx x}{2} \le 4 \frac{ \Norm{\est Y - \tx Y}{F}}{ \sqrt{ \tau_G } },
$$
and
$$
\min_{\theta \in [0, 2 \pi)} \Norm{\sgn(z) - e^{i\theta} \tx x}{2} \le  4 \frac{c_z \ \Norm{\est Y - \tx Y}{F}  }{ \sqrt{ \tau_G } }.
$$
\end{corollary}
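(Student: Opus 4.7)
The plan is to reduce this corollary to Theorem~\ref{thm: weighted new} by controlling the noise quantity $\Norm{R \circ (\est X - \tx X)}{F}$ that appears on the right-hand side of both \eqref{eq: Est1} and \eqref{eq: Est2}. With the weight choice $w_{\ell,j} = |\est Y_{\ell,j}|^2(1-\delta_{\ell,j})$, one has $R_{\ell,j} = |\est Y_{\ell,j}|$ on $E$ and $R_{\ell,j} = 0$ off $E$; since $\est X_{\ell,j} = \tx X_{\ell,j} = 0$ outside $E$, only indices $(\ell,j) \in E$ contribute to $\Norm{R \circ (\est X - \tx X)}{F}$. So the entire task is to bound these entries by $|\est Y_{\ell,j} - \tx Y_{\ell,j}|$ up to a universal constant.

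For $(\ell,j) \in E$, the key computation uses $M_{\ell,j} = |\est Y_{\ell,j}|$ together with $\est Y_{\ell,j} = M_{\ell,j}\, \est X_{\ell,j}$ to write
\[
R_{\ell,j}(\est X_{\ell,j} - \tx X_{\ell,j}) = M_{\ell,j}\est X_{\ell,j} - M_{\ell,j}\tx X_{\ell,j} = \est Y_{\ell,j} - M_{\ell,j}\tx X_{\ell,j}.
\]
To connect this to $\est Y_{\ell,j} - \tx Y_{\ell,j}$, the next step is to add and subtract $\tx Y_{\ell,j} = |\tx Y_{\ell,j}|\tx X_{\ell,j}$, apply the triangle inequality, and use the reverse triangle inequality
\[
\bigl|M_{\ell,j} - |\tx Y_{\ell,j}|\bigr| = \bigl||\est Y_{\ell,j}| - |\tx Y_{\ell,j}|\bigr| \le |\est Y_{\ell,j} - \tx Y_{\ell,j}|,
\]
which together with $|\tx X_{\ell,j}|=1$ yields the entrywise estimate $|R_{\ell,j}(\est X_{\ell,j} - \tx X_{\ell,j})| \le 2\,|\est Y_{\ell,j} - \tx Y_{\ell,j}|$. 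Squaring and summing over $(\ell,j)\in E$ then gives $\Norm{R \circ (\est X - \tx X)}{F} \le 2 \Norm{\est Y - \tx Y}{F}$.

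Plugging this bound into \eqref{eq: Est1} and \eqref{eq: Est2} of Theorem~\ref{thm: weighted new} immediately produces the two claimed inequalities, with the constant $4$ replacing the $2$ of Theorem~\ref{thm: weighted new}. The only step requiring any thought is the passage from $M_{\ell,j}\tx X_{\ell,j}$ to $\tx Y_{\ell,j}$, which is where the discrepancy between the noisy magnitudes $M$ (baked into the weights) and the true magnitudes $|\tx Y|$ (appearing in $\tx Y$) has to be absorbed. The reverse triangle inequality handles this cleanly and is the sole source of the additional factor of $2$; everything else is a direct invocation of Theorem~\ref{thm: weighted new}.
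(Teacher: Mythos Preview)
Your proposal is correct and follows essentially the same route as the paper: both reduce to Theorem~\ref{thm: weighted new} by showing the entrywise bound $|\est Y_{\ell,j}|\,|\est X_{\ell,j}-\tx X_{\ell,j}|\le 2\,|\est Y_{\ell,j}-\tx Y_{\ell,j}|$ via the triangle and reverse triangle inequalities. The only cosmetic difference is that the paper phrases this step as an application of the pre-established inequality~\eqref{eq: sgn in} (with $\alpha=\tx Y_{\ell,j}/|\est Y_{\ell,j}|$ and $\beta=\est X_{\ell,j}$), whereas you carry out the same add--subtract argument directly on $\est Y_{\ell,j}-M_{\ell,j}\tx X_{\ell,j}$.
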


\section{ Proofs}

\begin{proof}[Proof of Theorem \ref{thm: weighted new}]
We will proceed by establishing the following four inequalities.
\begin{equation} \label{eq: in1}
\min_{\theta \in [0, 2 \pi)} \Norm{\tilde{x} - e^{i\theta} \tx x}{2}^2 \le \frac{1}{\tau_G}\sum_{(\ell,j) \in E} w_{\ell,j} |\tx x_\ell^* \tilde{x}_\ell - \tx x_\ell^* \tilde{x}_j|^2,
\end{equation}
\begin{equation} \label{eq: in1 eig}
\min_{\theta \in [0, 2 \pi)} \Norm{\sgn(z) - e^{i\theta} \tx x}{2}^2 \le \frac{4}{\tau_G}\sum_{(\ell,j) \in E} w_{\ell,j} | \tx x_\ell^* z_\ell -   \tx x_j^* z_j |^2 ,
\end{equation}
\begin{equation} \label{eq: in2 eig}
\sum_{(\ell,j) \in E} w_{\ell,j} | \tx x_\ell^* z_\ell -   \tx x_j^* z_j |^2  \le c_z^2\Norm{R \circ (\est X - \tx X)}{F}^2.
\end{equation}
and
\begin{equation} \label{eq: in2}
\sum_{(\ell,j) \in E} w_{\ell,j} |\tx x_\ell^* \tilde{x}_\ell - \tx x_\ell^* \tilde{x}_j|^2 \le 4 \Norm{R \circ (\est X - \tx X)}{F}^2,
\end{equation}
Note that Inequality \eqref{eq: in1} has been derived in \cite{Preskitt.2018}, we will nevertheless include a proof for completeness.

Equation \eqref{eq: Est1} then follows by combining \eqref{eq: in1} and \eqref{eq: in2}, Equation \eqref{eq: Est2} is obtained as a combination of \eqref{eq: in1 eig} and \eqref{eq: in2 eig}. 
The condition for $xx^*$ to be a minimizer of \eqref{eq: L SDP} directly follows from the following Lemma.

\begin{lemma}[{\citep[Lemma 16]{Preskitt.2018}}]\label{l: 16}
Suppose $\tilde{x}\in\mathbb{T}^d$ is a minimizer of \eqref{eq: L formulation} and let $\tilde{L}=D-W\circ \tilde{x}\, \tilde{x}^*$. If 
$$
\Norm{\est L - \tilde L}{F} < \frac{\tau_G}{1 + \sqrt{d}},
$$
then $\tilde{x} \tilde{x}^*$ is minimizer of \eqref{eq: L SDP}.
\end{lemma}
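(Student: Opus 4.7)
The plan is to prove Lemma~\ref{l: 16} by SDP duality: construct an explicit dual certificate from the Lagrange multipliers attached to $\tilde{x}$ as an LSP minimizer, and verify dual feasibility under the noise hypothesis. The Lagrangian of \eqref{eq: L formulation} with the $d$ pointwise constraints $|z_i|^2=1$ forces real multipliers $\mu_i$ with $\est L\tilde{x}=\Lambda\tilde{x}$, where $\Lambda=\diag(\mu_1,\ldots,\mu_d)$ and $\mu_i=\tilde{x}_i^*(\est L\tilde{x})_i$. I propose to take this $\Lambda$ as the dual variable of \eqref{eq: L SDP}: primal feasibility of $Z^\star=\tilde{x}\tilde{x}^*$ is immediate ($|\tilde{x}_i|=1$, rank one, PSD), complementary slackness $(\est L-\Lambda)Z^\star=0$ follows from $\est L\tilde{x}=\Lambda\tilde{x}$, and Slater's condition holds at $Z=I$. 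The entire argument thus reduces to showing dual feasibility $\est L-\Lambda\succeq 0$.

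The structural ingredient is that $\tilde L$ is unitarily similar to the graph Laplacian $L_G$. Setting $D_{\tilde x}:=\diag(\tilde x)$, which is unitary because $\tilde x\in\Tor^d$, a direct entrywise computation gives $D_{\tilde x}^*\tilde L D_{\tilde x}=L_G$. Hence $\tilde L$ has the same spectrum as $L_G$, its kernel is $\spn\{\tilde x\}$, and $v^*\tilde L v\ge\tau_G\Norm{v}{2}^2$ for every $v\perp\tilde x$. This positive contribution on $\tilde x^\perp$ is what has to dominate the perturbation.

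Writing $\Delta:=\est L-\tilde L$, the identity $\tilde L\tilde{x}=0$ turns $\mu_i=\tilde{x}_i^*(\Delta\tilde{x})_i$, so $|\mu_i|=|(\Delta\tilde{x})_i|$ and $\Norm{\Lambda}{F}=\Norm{\Delta\tilde{x}}{2}\le\Norm{\Delta}{\infty}\sqrt d\le\sqrt d\,\Norm{\Delta}{F}$. Combined with the hypothesis and the triangle inequality,
\[
\Norm{\Delta-\Lambda}{\infty}\le\Norm{\Delta-\Lambda}{F}\le (1+\sqrt d)\Norm{\Delta}{F}<\tau_G.
\]
For a unit $v$, decompose orthogonally as $v=\alpha\tilde x/\sqrt d+\beta u$ with $u\perp\tilde x$, $\Norm{u}{2}=1$, and $|\alpha|^2+|\beta|^2=1$. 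Because $(\est L-\Lambda)\tilde x=0$ and $\est L-\Lambda$ is Hermitian, every cross term vanishes, leaving
\[
v^*(\est L-\Lambda)v=|\beta|^2\bigl(u^*\tilde L u+u^*(\Delta-\Lambda)u\bigr)\ge|\beta|^2\bigl(\tau_G-\Norm{\Delta-\Lambda}{\infty}\bigr)\ge 0.
\]
This proves $\est L-\Lambda\succeq 0$, so the KKT conditions are satisfied and $\tilde x\tilde x^*$ is optimal in \eqref{eq: L SDP}. The strict inequality above gives $\ker(\est L-\Lambda)=\spn\{\tilde x\}$, and complementary slackness then forces any primal optimizer to be a scalar multiple of $\tilde x\tilde x^*$; the diagonal constraint pins the scalar to $1$, giving uniqueness.

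The main obstacle I anticipate is the tight bookkeeping yielding precisely the factor $1+\sqrt d$ in the threshold. The multiplier $\Lambda$ is not a free parameter but is dictated by first-order optimality, and it contributes the worst-case term $\Norm{\Delta\tilde x}{2}\le\sqrt d\,\Norm{\Delta}{F}$ that, added to the $\Norm{\Delta}{F}$ from $\Delta$ itself, matches the hypothesis exactly. Any slack would weaken the threshold, and tightening it would require structural assumptions on $\Delta$ beyond the Frobenius norm (e.g.\ on its spectral norm), which are not available here.
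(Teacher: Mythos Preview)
Your proof is correct and follows the standard dual-certificate argument for SDP tightness. Note that the paper does not actually prove this lemma: it is quoted verbatim from \cite[Lemma~16]{Preskitt.2018} and invoked as a black box inside the proof of Theorem~\ref{thm: weighted new}. Your argument---first-order optimality of $\tilde{x}$ forces $\est L\tilde{x}=\Lambda\tilde{x}$ with real diagonal $\Lambda$, then $\est L-\Lambda\succeq 0$ via the spectral gap of $\tilde L=D_{\tilde x}L_G D_{\tilde x}^*$ on $\tilde x^\perp$---is precisely the approach one would expect in Preskitt's thesis as well, and every step checks out, including the bookkeeping that produces the sharp threshold $(1+\sqrt d)^{-1}\tau_G$.

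One small point worth making explicit in a write-up: the claim that the Lagrange multipliers $\mu_i=\tilde{x}_i^*(\est L\tilde{x})_i$ are \emph{real} is exactly the first-order stationarity condition on $\Tor^d$ (tangential derivative in the phase direction vanishes), and it is what makes $\Lambda$ Hermitian and hence a legitimate dual variable. You state this, but the justification is implicit.
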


It remains to prove the four inequalities.
To that extent, we recall that for $\alpha,\beta \in \Co$ with $|\beta| = 1$ we have
\begin{align}
| \sgn(\alpha) - \beta| 
& \le |\alpha - \beta | + | \sgn(\alpha) - \alpha | = | \alpha - \beta | + | 1 - |\alpha | | \nonumber \\
& = | \alpha - \beta | + | |\beta | - |\alpha| | \le 2| \alpha - \beta |. \label{eq: sgn in}
\end{align}
With help of this inequality we obtain that
\begin{align*}
\min_{\theta \in [0, 2 \pi]} \Norm{\sgn(z) - e^{i\theta} \tx x}{2}^2 
&   = \min_{\theta \in [0, 2 \pi]} \sum_{\ell = 1}^{d} | \sgn(z_\ell) - e^{i\theta} \tx x_\ell|^2 \nonumber \\
& \le 4 \min_{\theta \in [0, 2 \pi]} \sum_{\ell = 1}^{d} | z_\ell - e^{i\theta} \tx x_\ell|^2 \\
& = 4 \min_{\theta \in [0, 2 \pi]} \Norm{z - e^{i\theta} \tx x}{2}^2.
\end{align*}
Moreover, since $\Norm{x}{2}^2=d$ and $\Norm{z}{2}^2=d$ we have that
\begin{equation} \label{eq: norm decomposition}
\Norm{z - e^{i\theta} \tx x}{2}^2  = \Norm{z}{2}^2 + \Norm{ e^{i\theta} \tx x }{2}^2 - 2 \RE \left( e^{- i \theta} \tx x^* z \right) = 2d - 2 \RE \left( e^{- i \theta} \tx x^* z \right).
\end{equation}
The right hand side is minimal if $\RE \left( \tx x^* z \right)$ is maximal and equal to $| \tx x^* z |$.
Hence with $e^{i \vartheta} := \sgn(\tx x^* z)^*$ we arrive at  
\begin{equation} \label{eq: real part}
\RE \big( e^{- i \vartheta} \tx x^* z \big) = \RE \big( \sgn(\tx x^* z)^* \cdot \sgn(\tx x^* z) \cdot | \tx x^* z |  \big) = | \tx x^* z |,
\end{equation}
which motivates employing the following inequality 
\begin{equation}\label{eq: sgn}
\min_{\theta \in [0, 2 \pi)} \Norm{\sgn(z) - e^{i\theta} \tx x}{2}^2  \le 4 \Norm{z - e^{i \vartheta} \tx x}{2}^2.
\end{equation}
Define the unitary matrix  $C=\diag\{x_1,\dots, x_d\}$, where $x=(x_j)_{j=1}^d$. Note that   
\[
W \circ X = W \circ (x x^*) =  C W C^*
\]
and by union modularity of $x_i$
\[
D = D C C^* = C D C^*, 
\]
where we used commutativity of diagonal matrices. This results in
$$
\tx L = D - W \circ \tx X = C D C ^* - C W C^* =C L_G C^*,
$$
which shows in particular that the eigenvalues of $\tx L$ and $L_G$ coincide. \\
By assumption $\tau_G>0$ and hence the null space of $L_G$ is spanned by $\mathds{1}$. Thus  the null space of $L$ is spanned 
by $x$.  \\
The projection of $e^{i\vartheta}z$ onto the orthogonal complement of $x$ is given by 
$$
q := 
e^{- i \vartheta} z -   \big\langle e^{- i \vartheta} z, \frac{\tx x}{\Norm{\tx x}{2}}  \big\rangle\, \frac{\tx x}{\Norm{\tx x}{2}}
= e^{- i \vartheta} z - \frac{1}{d} \tx |\tx x^* z|\, x,
$$ 
where we used that by \eqref{eq: real part}, the inner product is real.
Consequently, as $q\perp x$, one has that by Pythagoras' theorem
\begin{equation} \label{eq: norm y}
\Norm{q}{2}^2 
= \Norm{e^{- i \vartheta} z}{2}^2 - \Norm{\frac{1}{d} \tx x |\tx x^* z|}{2}^2 
= \Norm{z}{2}^2- \frac{1}{d^2} \Norm{\tx x}{2}^2 |\tx x^* z|^2 = d - \frac{1}{d} |\tx x^* z|^2,
\end{equation}
and as $x$ is in the null space of $L$
$$
q^* \tx L q = (e^{- i \vartheta} z - \frac{1}{d} \tx x \cdot |\tx x^* z|)^* \tx L (e^{- i \vartheta} z - \frac{1}{d} \tx x \cdot |\tx x^* z|) = z^* \tx L z.
$$
In view of \eqref{eq: norm decomposition} and \eqref{eq: real part}, we have
$$
 \Norm{e^{- i \vartheta} z - \tx x}{2}^2 = 2(d - | \tx x^* z |).
$$
With the Cauchy-Schwarz inequality and \eqref{eq: norm y}, this yields that
$$
\Norm{q}{2}^2  
= d - \frac{1}{d} |\tx x^* z|^2 \ge  
d - |\tx x^* z| 
= \frac{1}{2} \Norm{z - e^{i \vartheta} \tx x}{2}^2.
$$
By definition $q$ is orthogonal to the null space of $\tx L$ which implies 
$$
z^* \tx L z = q^* \tx L q \ge \lambda_2(\tx L) \Norm{q}{2}^2  \ge  \frac{\lambda_2(\tx L)}{2} \Norm{z - e^{i \vartheta} \tx x}{2}^2.
$$
Combining this with \eqref{eq: sgn} and the fact that $\lambda_2(\tx L) = \tau_G$ as well as the definition  of $L$, we obtain both \eqref{eq: in1} and \eqref{eq: in1 eig} by 
\begin{align*}
& \min_{\theta \in [0, 2 \pi)} \Norm{\sgn(z) - e^{i\theta} \tx x}{2}^2  
\le 4 \Norm{z - e^{i \vartheta} \tx x}{2}^2 
\le \frac{8}{\tau_G} z^* \tx L z \\
& \quad
= \frac{8}{\tau_G}  \frac{1}{2} \sum_{ (\ell, j ) \in E} w_{\ell,j}  |z_\ell - \tx X_{\ell,j} z_j |^2 
= \frac{4}{\tau_G}  \sum_{ (\ell, j ) \in E}  w_{\ell,j}  |z_\ell -  \tx x_\ell \tx x_j^* z_j |^2 \\
& \quad 
= \frac{4}{\tau_G}  \sum_{ (\ell, j ) \in E}  w_{\ell,j}  | \tx x_\ell^* z_\ell -   \tx x_j^* z_j |^2.
\end{align*}
Indeed, \eqref{eq: in1} follows by comparing the second and the last item in this chain of inequalities, and \eqref{eq: in1 eig} by comparing the first and the last item.

Now we will prove inequality \eqref{eq: in2 eig} and  \eqref{eq: in2}, again with largely identical proofs.
For simplicity of notation, we introduce the following auxiliary variables 
$$
g_\ell := \tx x_\ell^* \tilde{x}_\ell, \  h_\ell := \tx x_\ell^* z_\ell,  \text{ and } \Lambda_{\ell,j} := \tx X_{\ell,j}^* \est X_{\ell,j} .
$$

We start by using $(\alpha + \beta)^2 \le 2\alpha^2 + 2 \beta^2 $ to get
\[
| h_\ell -  h_j |^2 
=| h_\ell - \Lambda_{\ell,j} h_j + \Lambda_{\ell,j} h_j -  h_j |^2 
\le 2| h_\ell - \Lambda_{\ell,j} h_j|^2 + 2|h_j|^2 |\Lambda_{\ell,j} - 1 |^2,
\]
and we further estimate
$$
\sum_{ (\ell, j ) \in E} w_{\ell,j} | h_\ell -  h_j |^2 
\le 2 \sum_{ (\ell, j ) \in E} w_{\ell,j} | h_\ell - \Lambda_{\ell,j} h_j|^2 
+ 2 \sum_{ (\ell, j ) \in E} w_{\ell,j} |h_j|^2 |\Lambda_{\ell,j} - 1 |^2.
$$

For the first sum we observe that
\[
| h_\ell - \Lambda_{\ell,j} h_j| 
= | \tx x_\ell^* z_\ell - \tx X_{\ell,j}^* \est X_{\ell,j} \tx x_j^* z_j|
= | \tx x_\ell^* z_\ell - \tx x_\ell^* \tx x_j \est X_{\ell,j} \tx x_j^* z_j|
= | z_\ell - \est X_{\ell,j} z_j|, 
\]
and obtain using \eqref{eq: L formulation} and the fact that $z$ minimizes \eqref{eq: L eig} that 
\begin{align}
\sum_{ (\ell, j ) \in E} w_{\ell,j}  | h_\ell - \Lambda_{\ell,j} h_j|^2
& = \sum_{ (\ell, j ) \in E} w_{\ell,j}  | z_\ell - \est X_{\ell,j} z_j|^2 
= 2 z^* \est L z 
\le 2 \tx x^* \est L \tx x \nonumber\\
& = \sum_{ (\ell, j ) \in E} w_{\ell,j}  | \tx x_\ell - \est X_{\ell,j} \tx x_j|^2 
= \sum_{ (\ell, j ) \in E} w_{\ell,j}  | \tx x_\ell \tx x_j^* - \est X_{\ell,j} |^2 \nonumber \\
& = \sum_{ (\ell, j ) \in E} w_{\ell,j} | \tx X_{\ell,j} - \est X_{\ell,j} |^2
= \Norm{ R \circ (\est X - \tx X)}{F}^2. \label{eq:Rbound}
\end{align} For the second sum we use that $|h_j| = |\tx x_j^* z_j| = |z_j|$ and obtain
\begin{align*}
\sum_{ (\ell, j ) \in E} w_{\ell,j} |h_j|^2  |\Lambda_{\ell,j} - 1 |^2 
& \le \max_{j \in [d] } |h_j|^2  \sum_{ (\ell, j ) \in E} w_{\ell,j}  |\Lambda_{\ell,j} - 1 |^2  = \Norm{z}{\infty}^2  \sum_{ (\ell, j ) \in E} w_{\ell,j}  |\Lambda_{\ell,j} - 1 |^2.
\end{align*} 
The last step is to notice that
\begin{align*}
\sum_{ (\ell, j ) \in E} w_{\ell,j}  |\Lambda_{\ell,j} - 1 |^2 
= \sum_{ (\ell, j ) \in E} w_{\ell,j}   | \tx X^*_{\ell,j} \est X_{\ell,j} - 1 |^2 = \\
\quad = \sum_{ (\ell, j ) \in E} w_{\ell,j}   | \tx X_{\ell,j} -  \est X_{\ell,j} |^2  
= \Norm{R \circ (\est X - \tx X)}{F}^2.
\end{align*} 
Putting everything together we arrive at 
\begin{align*}
\sum_{ (\ell, j ) \in E} w_{\ell,j} | h_\ell -  h_j |^2 
& \le 2 \sum_{ (\ell, j ) \in E} w_{\ell,j} | h_\ell - \Lambda_{\ell,j} h_j|^2 
+ 2 \sum_{ (\ell, j ) \in E} w_{\ell,j} |h_j|^2 |\Lambda_{\ell,j} - 1 |^2 \\
& \le  2 \Norm{R \circ (\est X - \tx X)}{F}^2 
+ 2  \Norm{z}{\infty}^2 \Norm{ R \circ (\est X - \tx X)}{F}^2 
 = c_z^2 \ \Norm{ R \circ (\est X - \tx X)}{F}^2.
\end{align*} 
This concludes the proof of Inequality \eqref{eq: in2 eig}.\\ 
For Inequality \eqref{eq: in2} we proceed analogously, with $\tilde x$ taking the role of $z$; the only difference is that in \eqref{eq:Rbound} we are using the fact that $\tilde x$ minimizes \eqref{eq: L formulation} rather than the fact that $z$ minimizes \eqref{eq: L eig}. The bound for the second sum is simplified as compared to \eqref{eq: in2 eig}, as we replaced $\Norm{z}{\infty}$ by $\Norm{\tilde x}{\infty} = 1$.

The combined bound reads as 
\begin{align*}
\sum_{ (\ell, j ) \in E} w_{\ell,j} \, | g_\ell -  g_j |^2  
& \le 2 \sum_{ (\ell, j ) \in E} w_{\ell,j} \, |\Lambda_{\ell,j} - 1 |^2  
+ 2 \sum_{ (\ell, j ) \in E} w_{\ell,j} \, | g_\ell - \Lambda_{\ell,j} g_j|^2 \\
& \le 4 \Norm{R \circ (\est X - \tx X)}{F}^2.
\end{align*} 

\end{proof}

\begin{proof}[Proof of Corollary \ref{cor: eig unweighted}.]
For an unweighted graph $G$ we immediately get 
\begin{align*}
\Norm{R \circ (\est X - \tx X)}{F}^2 
= \sum_{ (\ell, j ) \in E} 1 \cdot | \tx X_{\ell,j} -  \est X_{\ell,j} |^2 
= \Norm{\est X - \tx X}{F}^2.
\end{align*}
\end{proof}

\begin{proof}[Proof of Corollary \ref{cor: weights normal new}.]
Define an auxiliary weight matrix $W_0$ by $(W_0)_{\ell,j}=|Y_{\ell,j}|\, (1-\delta_{\ell,j})$. 
Using inequality \eqref{eq: Rem} we obtain 
\begin{align*}
\Norm{W \circ (\est X - \tx X)}{F}
& = \Norm{W \circ \est X - W \circ \tx X}{F}\\
& \le \Norm{W \circ \est X - W_0 \circ \tx X}{F} + \Norm{W_0 \circ \tx X - W \circ \tx X}{F}\\
& \le \Norm{\hat{Y} -  Y}{F} + \Norm{ (W_0 - W) \circ \tx X}{F}\\
& = \Norm{\hat{Y} -  Y}{F} + \Norm{ W_0 - W }{F}\\ 
& \le \Norm{\hat{Y} -  Y}{F} + \Norm{\hat{Y} - Y}{F}  = 2\Norm{\hat{Y} - \tx Y}{F},
\end{align*}
where in the third line we only increased the number of non-negative summands by adding the diagonal elements and in the last line we used the  
inequality $||\alpha |-|\beta || \le |\alpha - \beta |$. 
\end{proof}

\begin{proof}[Proof of Corollary \ref{cor: squared weights}.]
We rewrite the right side of the bound in Theorem \ref{thm: weighted new} as
\begin{align*}
\Norm{R \circ (\est X - \tx X)}{F}^2 
& = \sum_{ (\ell, j ) \in E} w_{\ell,j} \,  | \tx X_{\ell,j} -  \est X_{\ell,j} |^2 
= \sum_{ (\ell, j ) \in E} |\est Y_{\ell,j}|^2 \, | \tx X_{\ell,j} -  \est X_{\ell,j} |^2 \\
& = \sum_{ (\ell, j ) \in E} |\est Y_{\ell,j}|^2 \, | \sgn( \tx Y_{\ell,j}) -  \est X_{\ell,j} |^2 \\
& = \sum_{ (\ell, j ) \in E} |\est Y_{\ell,j}|^2 \, \left| \sgn\left( \frac{ \tx Y_{\ell,j}}{|\est Y_{\ell,j}|} \right) -  \est X_{\ell,j} \right|^2, \\
\end{align*}
and apply the inequality \eqref{eq: sgn in} to get 
\begin{align*}
\Norm{R \circ (\est X - \tx X)}{F}^2
& \le \sum_{ (\ell, j ) \in E} 4 |\est Y_{\ell,j}|^2 \, \left| \frac{ \tx Y_{\ell,j}}{|\est Y_{\ell,j}|}  -  \est X_{\ell,j} \right|^2 \\
& = \sum_{ (\ell, j ) \in E} 4 |\est Y_{\ell,j}|^2 \, \left| \frac{ \tx Y_{\ell,j}}{|\est Y_{\ell,j}|}  - \frac{\est Y_{\ell,j}}{|\est Y_{\ell,j}|}\right|^2 \\
& = \sum_{ (\ell, j ) \in E} 4 \left| \tx Y_{\ell,j} - \hat{Y}_{\ell,j} \right|^2  = 4 \Norm{\tx Y - \hat{Y}}{F}^2.\\
\end{align*}
\end{proof}

\section{Numerical evaluation}
\label{sec: numerics}

In this section we present a numerical comparison of the error bounds discussed above. Our goal is to illustrate that Theorem \ref{thm: weighted new} indeed provides superior recovery guarantees for an important class of weighted angular synchronization problems, namely those appearing in the context of ptychography, as covered by Corollaries~\ref{cor: weights normal new} and \ref{cor: squared weights}. In particular, we work with  the edge set $E_\delta$ as in \eqref{eq: E delta}, for some parameter $\delta \in [ \lfloor (d+1)/2 \rfloor ] $, which determines the neighborhood of indices for which the pairwise phase differences are known.

In our numerical experiments, we consider measurements affected by angular noise, that is, the measurements are of the form \eqref{eq: X tilde}, i.e.
\[
\est X_{\ell,j}=
\begin{cases}
e^{i (\varphi_\ell - \varphi_j + \eta_{\ell,j}) }, & (\ell,j) \in E_\delta,\\
0, & (\ell,j) \notin E_\delta,
\end{cases}
\]
with the noise model that $\eta_{\ell,j}, (\ell,j) \in E_\delta$ are independent random variables uniformly distributed on  $[-\alpha,\alpha]$ for some parameter $\alpha > 0$ 
representing the noise level.


We consider signals $\tx y$ drawn at random with coordinates $\tx y_\ell = a_\ell + i b_\ell$, where $a_\ell$ and $b_\ell$ are independent identically distributed 
standard Gaussian random variables. We assume that the $|y_\ell|$ are known, so the phases of the $\tx y_\ell$ are our unknown ground truth entries $\tx x_\ell = e^{i\varphi_\ell}$. 
In most of the following examples, we fix the dimension to be $d = 64$ and the parameter $\delta =16$, so that approximately half of the pairwise phase differences are known. 
For each point in the figures we generated 30 test signals and plot the average norm of the error. All experiments were performed on the laptop computer running Windows 10 Pro with an Intel(R) Core(TM) i7-8550U processor, with 16 GB RAM and Matlab R2018b. 

\par  

We begin with the comparison of the recovery guarantees for the different weight matrices covered by Corollaries \ref{cor: eig unweighted}, \ref{cor: weights normal new}, and \ref{cor: squared weights} in terms of the angular noise level $\alpha$ measured in degrees. To put the bounds into perspective, we include the empirical error of both SDP and ER.

Due to the fact that the coordinates of $\tilde{x}$ and $\tx x$ have modulus $1$, a na\"ive bound for the phase error is given by
\begin{equation}\label{eq: ph error bound} 
\Norm{\tilde{x} - e^{i\theta} \tx x}{2} \le \Norm{\tilde{x}}{2} + \Norm{e^{i\theta} \tx x}{2} = 2 \sqrt d.
\end{equation}

Beyond this threshold, the error bounds provided by the statements are non-informative, which is why we indicate the threshold by a dashed black line in the plots.

\par  

\begin{figure}[b!]
\centering
\hspace{0.5cm}
\begin{subfigure}[t]{.43\linewidth}
\begin{tikzpicture}[scale=0.8]
\pgfplotstableread{figures/unweighted_data.csv}{\linea}
\begin{loglogaxis}[
xlabel={Noise in degrees},
legend columns=2,
legend style={at={(0.95,-0.25)},anchor=north east,font=\fontsize{8}{8}\selectfont},
]
\addplot[Green,thick, mark = diamond*,line width=1pt] table[x=Angle, y=SDP]{\linea};
\addplot[Red,dashed,mark=square*,line width=1pt] table[x=Angle, y=Eigen]{\linea};
\addplot[Green,dashed, mark = diamond*,line width=1pt] table[x=Angle, y=Theorem22]{\linea};
\addplot[Red,thick,mark = square*,line width=1pt] table[x=Angle, y=Corollary32]{\linea};
\addplot[Blue,thick,mark=pentagon,line width=1pt] table[x=Angle, y=Theorem23]{\linea};
\addplot[Purple,thick,mark=otimes,line width=1pt] table[x=Angle, y=Theorem21]{\linea};
\addplot[Orange,thick,mark=triangle, line width=1pt] table[x=Angle, y=Theorem24]{\linea};
\addplot[Black,dashed,line width=2pt]  table[x=Angle, y expr={16}]{\linea};
\legend{
SDP emp. error,
ER emp. error,
Theorem \ref{thm: SDP},
Corollary \ref{cor: eig unweighted},
Theorem \ref{eq: SqrtEst},
Theorem \ref{thm: weak bound},
Theorem \ref{eq:prevworkweights},
Na\"ive bound \eqref{eq: ph error bound},
}
\end{loglogaxis}
\end{tikzpicture}
\caption*{}
\end{subfigure}
\hspace{1cm}
\begin{subfigure}[t]{.43\textwidth}
\begin{tikzpicture}[scale=0.8]

\pgfplotsset{
    axis on top,
    ylabel style={align=center},
}

\begin{groupplot}[
        group style={
        group size=1 by 2,
        vertical sep=55pt,
        },
        height =4.5cm,
        width = 8cm,
    ]

\nextgroupplot[	
	xmode = log,	
    	ylabel={Rank},
     title={Average rank of the SDP solutions},
     xlabel={Angle},
    ]
    \addplot  table[x = Angle, y =Rank,col sep=space]{figures/unweighted_rank_data.csv};

\nextgroupplot[	
	xmode = log,	
    	ylabel={Norm $\Norm{z}{\infty}$}, 
     title={Average norm $\Norm{z}{\infty}$ }, 
     xlabel={Angle},
    ]
    \addplot  table[x = Angle, y =Norm,col sep=space]{figures/unweighted_norm_data.csv};  
\end{groupplot}
\end{tikzpicture}%
\caption*{}
\end{subfigure}
\caption{{\em Left:} Comparison of the recovery guarantees and true errors for unweighted angular synchronization in the context of the ptychography problem, $d = 64$, $\delta=16$.\\
{\em Right:} Rank of the SDP solution and supremum norm of the ER solution as measures for the tightness of the respective relaxation.
}
\label{fig: unweighted}
\end{figure}

Both for unweighted graphs (Figure \ref{fig: unweighted}) and weighted graphs (Figures \ref{fig: amplitudes} and \ref{fig: squared amplitudes}), we observe that the empirical error performs similarly for both SDP and ER; there is no significant difference between the two methods in terms of the phase error. 
For the low and medium noise levels, the phase error rises linearly with the angular noise level. Only for very high noise it exhibits faster growth.
\par

For unweighted graphs (Figure \ref{fig: unweighted}), the guarantees provided by Corollary \ref{cor: eig unweighted} for ER more or less agree with the bounds given by Theorem \ref{thm: SDP} for the least squares problem. This is remarkable not only because ER is faster than SDP (see Figure \ref{fig: dim_vs_time} below), but also in view of the significantly larger range of tightness of the relaxation, also depicted in Figure \ref{fig: unweighted}. Namely, Corollary \ref{cor: eig unweighted} exhibits additional dimensional scaling factors only when the supremum norm of $z$ is very large, which is not the case even for a large noise level. On the other hand, the SDP relaxation is only tight when the solution is of rank one, which already fails for a noise level of a few degrees.
Thus for larger noise, the error bounds for LSP no longer apply for the SDP solution.



\begin{figure}[t!]
\centering
\hspace{0.5cm}
\begin{subfigure}[t]{.43\linewidth}
\begin{tikzpicture}[scale=0.8]
\pgfplotstableread{figures/amplitude_data.csv}{\linea}
\begin{loglogaxis}[
xlabel={Noise in degrees},
legend columns=2,
legend style={at={(0.95,-0.25)},anchor=north east,font=\fontsize{8}{8}\selectfont},
]
\addplot[Green,thick, mark = diamond*,line width=1pt] table[x=Angle, y=SDP]{\linea};
\addplot[Red,dashed,mark=square* ,line width=1pt] table[x=Angle, y=Eigen]{\linea};
\addplot[Green,dashed, mark = diamond*,line width=1pt] table[x=Angle, y=Corollary33]{\linea};
\addplot[Red,thick,mark=square* ,line width=1pt] table[x=Angle, y=Corollary33Eigen]{\linea};
\addplot[Blue,mark=pentagon,thick,line width=1pt] table[x=Angle, y=Theorem23]{\linea};
\addplot[Orange,thick,mark=triangle,line width=1pt] table[x=Angle, y=Theorem24]{\linea};
\addplot[Black ,dashed,line width=2pt]  table[x=Angle, y expr={16}]{\linea};
\legend{
SDP emp. error,
ER emp. error,
Corollary \ref{cor: weights normal new} (LSP),
Corollary \ref{cor: weights normal new} (ER),
Theorem \ref{eq: SqrtEst},
Theorem \ref{eq:prevworkweights},
Na\"ive bound \eqref{eq: ph error bound},
}
\end{loglogaxis}
\end{tikzpicture}
\caption*{}
\end{subfigure}
\hspace{1cm}
\begin{subfigure}[t]{.43\textwidth}
\begin{tikzpicture}[scale=0.8]

\pgfplotsset{
    axis on top,
    ylabel style={align=center},
}

\begin{groupplot}[
        group style={
        group size=1 by 2,
        vertical sep=55pt,
        },
        height =4.5cm,
        width = 8cm,
    ]

\nextgroupplot[	
	xmode = log,	
    	ylabel={Rank},
     title={Average rank of the SDP solutions},
     xlabel={Angle},
    ]
    \addplot  table[x = Angle, y =Rank,col sep=space]{figures/amplitude_rank_data.csv};

\nextgroupplot[	
	xmode = log,	
    	ylabel={Norm $\Norm{z}{\infty}$}, 
     title={Average norm $\Norm{z}{\infty}$ }, 
     xlabel={Angle},
    ]
    \addplot  table[x = Angle, y =Norm,col sep=space]{figures/amplitude_norm_data.csv};  
\end{groupplot}
\end{tikzpicture}%
\caption*{}
\end{subfigure}
\caption{{\em Left:} Comparison of the recovery guarantees and true errors for angular synchronization for wights as in Corollary~\ref{cor: weights normal new}, $d = 64$, $\delta=16$.\\
{\em Right:} Rank of the SDP solution and supremum norm of the ER solution as measures for the tightness of the respective relaxation.
}
\label{fig: amplitudes}
\end{figure}

\begin{figure}[b!]
\centering
\hspace{0.5cm}
\begin{subfigure}[b]{.43\linewidth}
\begin{tikzpicture}[scale=0.8]
\pgfplotstableread{figures/squared_data.csv}{\linea}
\begin{loglogaxis}[
xlabel={Noise in degrees},
legend columns=2,
legend style={at={(0.95,-0.25)},anchor=north east,font=\fontsize{8}{8}\selectfont},
]
\addplot[Green,thick, mark = diamond*,line width=1pt] table[x=Angle, y=SDP]{\linea};
\addplot[Red,dashed,mark=square*,line width=1pt] table[x=Angle, y=Eigen]{\linea};
\addplot[Green,dashed, mark = diamond*,line width=1pt] table[x=Angle, y=Corollary34]{\linea};
\addplot[Red,thick,mark=square*,line width=1pt] table[x=Angle, y=Corollary34Eigen]{\linea};
\addplot[Blue,thick,mark=pentagon,line width=1pt] table[x=Angle, y=Theorem23]{\linea};
\addplot[Orange,thick,mark=triangle,line width=1pt] table[x=Angle, y=Theorem24]{\linea};
\addplot[Black,dashed,line width=2pt]  table[x=Angle, y expr={16}]{\linea};
\legend{
SDP emp. error,
ER emp. error,
Corollary \ref{cor: squared weights} (LSP),
Corollary \ref{cor: squared weights} (ER),
Theorem \ref{eq: SqrtEst},
Theorem \ref{eq:prevworkweights},
Na\"ive bound \eqref{eq: ph error bound},
}
\end{loglogaxis}
\end{tikzpicture}
\caption*{}
\end{subfigure}
\hspace{1cm}
\begin{subfigure}[b]{.43\textwidth}
\begin{tikzpicture}[scale=0.8]

\pgfplotsset{
    axis on top,
    ylabel style={align=center},
}

\begin{groupplot}[
        group style={
        group size=1 by 2,
        vertical sep=55pt,
        },
        height =4.5cm,
        width = 8cm,
    ]

\nextgroupplot[	
	xmode = log,	
    	ylabel={Rank},
     title={Average rank of the SDP solutions},
     xlabel={Angle},
    ]
    \addplot  table[x = Angle, y =Rank,col sep=space]{figures/squared_rank_data.csv};

\nextgroupplot[	
	xmode = log,	
    	ylabel={Norm $\Norm{z}{\infty}$}, 
     title={Average norm $\Norm{z}{\infty}$ }, 
     xlabel={Angle},
    ]
    \addplot  table[x = Angle, y =Norm,col sep=space]{figures/squared_norm_data.csv};  
\end{groupplot}
\end{tikzpicture}%
\caption*{}
\end{subfigure}
\caption{{\em Left:} Comparison of the recovery guarantees and true errors for angular synchronization for wights as in Corollary~\ref{cor: squared weights}, $d = 64$, $\delta=16$.\\
{\em Right:} Rank of the SDP solution and supremum norm of the ER solution as measures for the tightness of the respective relaxation.
}
\label{fig: squared amplitudes}
\end{figure}


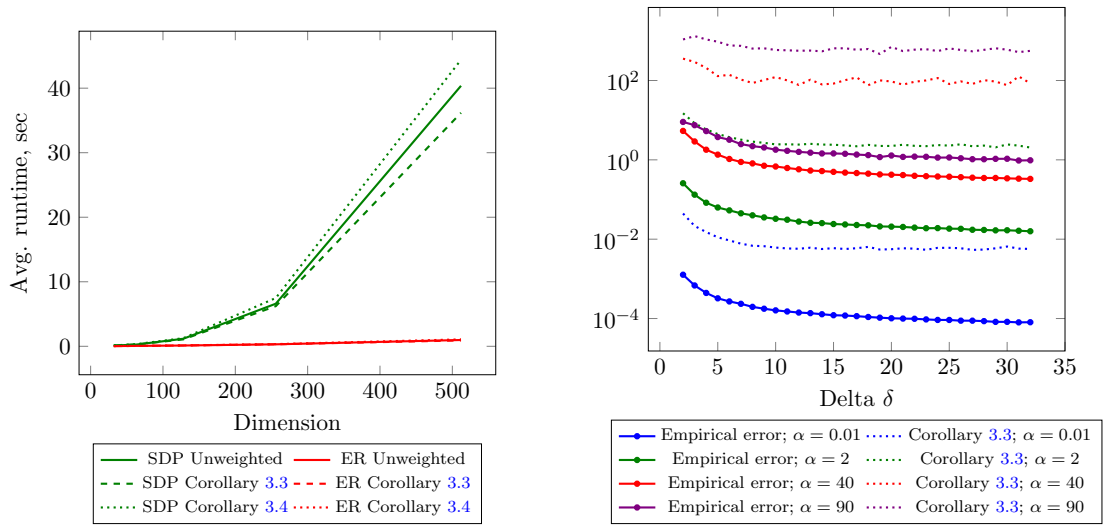
\begin{figure}[t!]
\centering
\hspace{0.5cm}
\begin{subfigure}[t]{.43\linewidth}
\begin{tikzpicture}[scale=0.8]
\pgfplotstableread{figures/time_data.csv}{\linea}
\begin{axis}[
xlabel={Dimension},
ylabel={Avg. runtime, sec},
legend style={at={(0.5,-0.2)},anchor=north ,font=\fontsize{8}{8}\selectfont},
legend columns =2
]
\addplot[Green,thick,line width=1pt] table[x=Dimension, y=SUnweighted]{\linea};
\addplot[Red,thick,line width=1pt] table[x=Dimension, y=EUnweighted]{\linea};

\addplot[Green,dashed,line width=1pt] table[x=Dimension, 
y=SAmplitudes]{\linea};
\addplot[Red,dashed,line width=1pt] table[x=Dimension, y=EAmplitudes]{\linea};

\addplot[Green,dotted,line width=1pt] table[x=Dimension, y=SSquared]{\linea};
\addplot[Red,dotted,line width=1pt] table[x=Dimension, y=ESquared]{\linea};
\legend{
SDP Unweighted,
ER Unweighted,
SDP Corollary \ref{cor: weights normal new},
ER Corollary \ref{cor: weights normal new},
SDP Corollary \ref{cor: squared weights},
ER Corollary \ref{cor: squared weights},
}
\end{axis}
\end{tikzpicture}
\caption{Time complexity of SDP and ER for weighted and unweighted graphs.}
\label{fig: dim_vs_time}
\end{subfigure}
\hspace{1cm}
\begin{subfigure}[t]{.43\textwidth}
\begin{tikzpicture}[scale=0.8]
\pgfplotstableread{figures/delta_data.csv}{\linea}
\begin{semilogyaxis}[
xlabel={Delta $\delta$},
mark size=1.0pt,
legend style={at={(0.5,-0.2)},anchor=north,font=\fontsize{8}{8}\selectfont},
legend columns=2
]
\addplot[Blue,thick,line width=1pt,mark=*] table[x=delta, y=angle001]{\linea};
\addplot[Blue,dotted,line width=1pt] table[x=delta, y=angle001b]{\linea};

\addplot[Green,thick,line width=1pt,mark=*] table[x=delta, y=angle2]{\linea};
\addplot[Green,dotted,line width=1pt] table[x=delta, y=angle2b]{\linea};

\addplot[Red,thick,line width=1pt,mark=*] table[x=delta, y=angle40]{\linea};
\addplot[Red,dotted,line width=1pt] table[x=delta, y=angle40b]{\linea};

\addplot[Purple,thick,line width=1pt,mark=*] table[x=delta, y=angle90]{\linea};
\addplot[Purple,dotted,line width=1pt] table[x=delta, y=angle90b]{\linea};
\legend{
Empirical error; $\alpha=0.01$,
Corollary \ref{cor: weights normal new}; $\alpha=0.01$,
Empirical error; $\alpha=2$,
Corollary \ref{cor: weights normal new}; $\alpha=2$,
Empirical error; $\alpha=40$,
Corollary \ref{cor: weights normal new}; $\alpha=40$,
Empirical error; $\alpha=90$,
Corollary \ref{cor: weights normal new}; $\alpha=90$,
}
\end{semilogyaxis}
\end{tikzpicture}
\caption{Impact of number of known pairwise differences (encoded by parameter $\delta$) on the empirical error and error bound for ER applied to ptychography setting (Corollary \ref{cor: weights normal new}).}
\label{fig: delta}
\end{subfigure}
\caption{Performance of the different methods and setups.}
\end{figure}

For weighted graphs 
(Figures \ref{fig: amplitudes} and \ref{fig: squared amplitudes}), our bounds for the LSP improve upon previous works for a very large range of noise levels, and again, our guarantees for ER are very close to those of LSP. This is even more relevant than in the unweighted case, as the SDP relaxation is not tight even when the noise level is as small as $10^{-3}$. 
The tightness of ER is also inferior to the unweighted case, but we do observe approximate tightness for noise levels of a few degrees.

In terms of the runtime complexity (Figure \ref{fig: dim_vs_time}), ER exhibits almost linear scaling in the dimension of the problem and clearly outperforms SDP, whose runtime exhibits quadratic scaling. This difference is to be expected as SDP lifts the problem to a $d \times d$-dimensional matrix space and thus estimates $d^2$ unknowns instead of $d$ in the case of ER. In fact the large runtime complexity is a crucial bottleneck for SDP relaxations in ptychography, where the dimensions are commonly high.




The last simulation (Figure \ref{fig: delta}) illustrates how both the empirical error and our error bounds depend on the size of the mask in ptychography (which in turn is related to the connectivity of the graph). Again, up to constants, we observe a similar decay pattern for the error between theory and experiment with a fast decay for small $\delta$ (up to $\mathcal{O}(\log d)$) and slower decay for larger values of $\delta$.






\section{Discussion and future work}

\begin{figure}[t!]
\centering
\begin{tikzpicture}[scale=0.8]

\pgfplotsset{
    axis on top,
    ylabel style={align=center},
}

\begin{groupplot}[
        group style={
        group size=2 by 1,
        horizontal sep=55pt,
        }
    ]

\nextgroupplot[	
    	ylabel={Average phase error},
     xlabel={Angle},
    ]
    \addplot[Green,thick,line width=1pt]  table[x = Angle, y =SUnweighted,col sep=space]{figures/mandw1_data.csv};
    \addplot[Green,dashed,line width=1pt]  table[x = Angle, y =SAmplitudes,col sep=space]{figures/mandw1_data.csv};
    \addplot[Green,dotted,line width=1pt]  table[x = Angle, y =SSquared,col sep=space]{figures/mandw1_data.csv};
    \addplot[Red,thick,line width=1pt]  table[x = Angle, y =EUnweighted,col sep=space]{figures/mandw1_data.csv};
    \addplot[Red,dashed,line width=1pt]  table[x = Angle, y =EAmplitudes,col sep=space]{figures/mandw1_data.csv};
    \addplot[Red,dotted,line width=1pt]  table[x = Angle, y =ESquared,col sep=space]{figures/mandw1_data.csv};

    \coordinate (c1) at (rel axis cs:0,1);

\nextgroupplot[	
	ymode = log,	
    	ylabel={Average phase error},
     xlabel={Signal to Noise Ratio},
     legend style={at={($(0,0)$)},legend columns=3,fill=none,draw=black,anchor=center,align=center},
            legend to name= leg1
    ]
    \addplot[Green,thick,line width=1pt]  table[x = SNR, y =SUnweighted,col sep=space]{figures/mandw2_data.csv};
    \addplot[Green,dashed,line width=1pt]  table[x = SNR, y =SAmplitudes,col sep=space]{figures/mandw2_data.csv};
    \addplot[Green,dotted,line width=1pt]  table[x = SNR, y =SSquared,col sep=space]{figures/mandw2_data.csv};
    \addplot[Red,thick,line width=1pt]  table[x = SNR, y =EUnweighted,col sep=space]{figures/mandw2_data.csv};
    \addplot[Red,dashed,line width=1pt]  table[x = SNR, y =EAmplitudes,col sep=space]{figures/mandw2_data.csv};
    \addplot[Red,dotted,line width=1pt]  table[x = SNR, y =ESquared,col sep=space]{figures/mandw2_data.csv};
    \legend{
SDP Unweighted,
SDP Amplitudes,
SDP Squared amplitudes,
ER Unweighted,
ER Amplitudes,
ER Squared amplitudes,
}
    \coordinate (c2) at (rel axis cs:-0.1,0);
\end{groupplot}

\node[below] at (c2 |- current bounding box.south)
{\pgfplotslegendfromname{leg1}};
\end{tikzpicture}%
\caption{Performance of the different relaxations for all weight setups. \\
{\em Left:} Random angular noise model used in the Section \ref{sec: numerics}. \\
{\em Right:} Angular synchronization as a part of the ptychographic reconstruction in \cite{Iwen.2020}\\ Noise decreases as SNR increases.}
\label{fig: methods}
\end{figure}

The main focus of this paper is the eigenvector relaxation of the angular synchronization problem. We derived new flexible error bounds for this method. Along the way, we established new recovery guarantees for the solution of the weighted least squares problem \eqref{eq: ls formulation}. Our numerical evaluation shows that obtained recovery guarantees outperform other results in the literature. As compared to semidefinite programming, eigenvector relaxation shows similar performance in the terms of empirical error and has significantly shorter runtime; at the same time our recovery guarantees are not subject to additional constraint corresponding to the tightness of the relaxation as they appear for the semidefinite programming. 

Our numerical experiments are based on the simple random angular noise model, which likely does not correspond to the noise arising in applications. Also minimizing the phase error, as explored in Section~\ref{sec: numerics} is likely not the optimal objective for ptychography. Rather a weighted error model should be used to capture the varying impact of errors arising for coefficients of different magnitude. 
We consider it to be an interesting direction for future work to explore more application driven noise models and error metrics.

A first numerical exploration in this direction is shown in Figure~\ref{fig: methods}. We observe that while for the simplified noise model, unweighted angular synchronization seems most appropriate, for Gaussian noise applied directly to the phaseless measurements \cite{Iwen.2020}, weighted angular synchronization is the method of choice.


Another interesting direction of future work is to extend the generalized power method developed by Boumal in \cite{Boumal.2016} to arbitrary sets $E$. Our current results can be considered as a first step, 
since the generalized power method uses the solution of the eigenvector relaxation problem as initialization, so good bounds on the error are crucial for estimating the quality of the initialization.

\section{Acknowledgments}
OM and FF were partially supported by the Helmholtz Association within the project Ptychography4.0.
FK acknowledges support by the German Science Foundation DFG in the context of an Emmy Noether junior research group (project KR 4512/1-1).

This is a preprint of an article published in Journal of Fourier Analysis and Applications. The final authenticated version is available online at:\\ 
\url{https://doi.org/10.1007/s00041-021-09834-1}

\bibliography{} 
\end{document}